\title[The Cucker-Smale model with a time-delay on a general digraph]{Interplay of time-delay and velocity alignment in the Cucker-Smale model on a general digraph}
\author[Dong]{Jiu-Gang Dong}
\address[Jiu-Gang Dong]{\newline Department of Mathematics \newline Harbin Institute of Technology, China}
\email{jgdong@hit.edu.cn}
\author[Seung-Yeal Ha]{Seung-Yeal Ha}
\address[Seung-Yeal Ha]{\newline Department of Mathematical Sciences and Research Institute of Mathematics, Seoul National University, Seoul 151-747, Korea}
\email{syha@snu.ac.kr}
\author[Kim]{Doheon Kim}
\address[Doheon Kim]{\newline Department of Mathematical Sciences, \newline Seoul National University, Seoul 08826, Korea (Republic of)}
\email{dohun930728@snu.ac.kr}
\newtheorem{theorem}{Theorem}[section]
\newtheorem{lemma}{Lemma}[section]
\newtheorem{corollary}{Corollary}[section]
\newtheorem{proposition}{Proposition}[section]
\newtheorem{remark}{Remark}[section]
\newtheorem{definition}{Definition}[section]
\newcommand{\bbr}{\mathbb R}
\newcommand{\bx}{\mbox{\boldmath $x$}}
\newcommand{\bv}{\mbox{\boldmath $v$}}
\newcommand{\R}{\mathbb R}
\newcommand{\N}{\mathcal N}
\newcommand{\G}{\mathcal G}
\newcommand{\V}{\mathcal V}
\newcommand{\C}{\mathcal C}
\newcommand{\D}{\mathcal D}
\newcommand{\E}{\mathcal E}
\newcommand{\NN}{\mathbb N}
\def\bx{\boldsymbol x}
\def\bv{\boldsymbol v}
\def\dist{\mathsf{dist}}
\begin{document}

\date{\today}

\subjclass{92D25, 74A25, 76N10.}
\keywords{Cucker-Smale model, flocking, communication delays, multi-agent systems}

\thanks{\textbf{Acknowledgment.} The work of S.-Y. Ha is supported by
the Samsung Science and Technology Foundation under Project Number SSTF-BA1401-03}

\begin{abstract}
We study dynamic interplay between time-delay and velocity alignment in the ensemble of Cucker-Smale (C-S) particles(or agents) on time-varying networks which are modeled by digraphs containing spanning trees. Time-delayed dynamical systems often appear in mathematical models from biology and control theory, and they have been extensively investigated in literature. In this paper, we provide sufficient frameworks for the mono-cluster flocking to the continuous and discrete C-S models, which are formulated in terms of system parameters and initial data. In our proposed frameworks, we show that the continuous and discrete C-S models exhibit exponential flocking estimates. For the explicit C-S communication weights which decay algebraically, our results exhibit threshold phenomena depending on the decay rate and depth of digraph. We also provide several numerical examples and compare them with our analytical results.
\end{abstract}
\maketitle \centerline{\date}

\tableofcontents

\section{Introduction}
\setcounter{equation}{0}
The terminology ``{\it flocking}" represents a collective phenomenon in which the ensemble of flocking particles organize into an ordered motion using only limited information and simple rules. This flocking behavior appears in many biological complex systems, e.g., flocking of birds, schooling of fish and swarming of bacteria, etc. The flocking problem has received lots of attention from researchers in several scientific disciplines such as computer science, biology and statistical physics etc in the last decades \cite{BCGLOPVZ08, CKFS05, CS07,CS071, JLM03, OS06, Rey87, TJJ07, VKJS95, VZ12} due to its engineering applications in mobile sensor networks, cooperative robotics and formation flying spacecrafts~\cite{BLH01,CMKB04,PEG09}. After Vicsek's seminal work \cite{VKJS95}, several mathematical models have been proposed in literature \cite{VZ12}. Among them, our main interest in this paper lies on the Newton type particle system proposed by Cucker and Smale \cite{CS07} in a decade ago.  In this model, the interaction weight between particles is assumed to be an algebraically decaying function depending on the Euclidean distance between particles. One of the novel features in the Cucker-Smale(C-S) model is that emergence of flocking exhibits a threshold phenomenon, for example, when the decay rate is slower than that of Coloumb potential, the emergence of global flocking is guaranteed for any initial configuration, whereas if the decay rate is faster than that of Coloumb potential, the emergence of global flocking holds only for some restricted class of initial configuration. As discussed in the abstract, our main focus in this paper is the interplay between communication time-delay and velocity alignment in the C-S ensemble.

Next, we briefly discuss our continuous-time C-S type model (see Section \ref{sec:2.1} for details). Let $(\bx_i(t),\bv_i(t)) \in \bbr^{2d}$ be the position and velocity of the $i$-th C-S particle, respectively, and for $\tau > 0$, the time-delay function $\tau_{ij}: \bbr_+ \to [0, \tau]$ is bounded and continuous function representing the communication time-delay in the information flow from the $j$-th particle to the $i$-th particle, and we also denote the network topology via the connection matrix $(\chi_{ij})$:
\[ \chi_{ij} = \begin{cases}
1, \quad & \mbox{if $j$ transmits information to $i$}, \cr
0, \quad & \mbox{otherwise,}
\end{cases}
\]
and the communication weights between $i$-th and $j$-th particles are registered by the communication weight function $\psi(\|\bx_j(t -\tau_{ij}(t)) - \bx_i(t) \|)$ as for the general C-S model \cite{H-Liu}. Under the aforementioned setup, the dynamics of $(\bx_i(t),\bv_i(t))$ is governed by the following Cauchy problem:
\begin{equation} \label{A-1}
\begin{cases}
\displaystyle \dot{\bx}_i(t)=\bv_i(t), \quad t > 0,~~i = 1, \cdots, N, \\
\displaystyle \dot{\bv}_i(t) = \sum\limits_{j = 1}^{N}  \chi_{ij} \psi(\|\bx_j(t -\tau_{ij}(t)) - \bx_i(t) \|) (\bv_j(t-\tau_{ij}(t))-\bv_i(t)), \\
(\bx_i, \bv_i)(t) = (\bx_i^0(t), \bv_i^0(t)), \quad -\tau \leq t \leq 0,
\end{cases}
\end{equation}
where the weight function $\psi$ is assumed to be bounded, locally Lipschitz continuous positive non-increasing functions defined on the nonnegative real numbers:
\begin{equation} \label{A-2}
0 < \psi(r) \leq \kappa , \quad r \geq 0, \quad (\psi(r_1)  - \psi(r_2)) (r_1 - r_2) \leq 0, \quad r_1, r_2 \geq 0.
\end{equation}
and initial data $(\bx_i^0(t), \bv_i^0(t))$ are assumed to be bounded and continuous in the time-interval $[-\tau, 0]$.

The well-posedness of the time-delayed system \eqref{A-1} and \eqref{A-2} can be found in \cite{Hale-L}, which proves the existence and uniqueness of the classical solutions to system \eqref{A-1} for any continuous initial conditions. For the complete network, zero time-delay and C-S communication weight:
\[ \chi_{ij} = 1, \quad \tau_{ij} \equiv 0, \quad  1 \leq i, j \leq N, \qquad \psi(r) = \frac{1}{(1 + r^2)^{\beta}}, \quad \beta \geq 0, \]
system \eqref{A-1} was first proposed in \cite{CS07,CS071} and the threshold phenomena between global and local flocking have been observed. Since then, these flocking estimates have been generalized for a general non-increasing communication weight in \cite{H-Liu} which corresponds to the zero-time delayed case \eqref{A-1} with $\tau_{ij}  =0,~\chi_{ij} =1$.  Emergent dynamics  and related topics have been extensively studied from many perspectives, e.g., collision avoidance \cite{A-C-H-L, CD10}, random effects \cite{A-H, C-M, HLL09}, mean-field limit and measure-valued solutions \cite{B-C-C, C-C-R}, collective dynamics \cite{C-F-R-T, CS07, CS071, F-H-T,  H-Liu, H-T, She}, local flocking \cite{C-H-H-J-K1, H-K-Z}, bonding force \cite{P-K-H}, generalized flocking \cite{M-T}, singular and hyperbolic limits \cite{P-S}, kinetic equation \cite{D-F-T}, application to flight formation \cite{PEG09} and flocking with leaders \cite{L-X} etc.

Time delays often appear in practical applications due to position and velocity data processing and transmission, as pointed out by the recent works~\cite{VVSTSNV14, VVTSSNV14}, which have performed some real experiments to achieve several fundamental collective flight tasks with autonomous aerial robots.  The C-S model \eqref{A-1} - \eqref{A-2}  with time-delay has been studied in literature. For example, in \cite{CH17,LW14,PT17}, the C-S model with a single constant time delay and time varying delays under the complete graph (i.e., all-to-all interactions) is considered, respectively.   In  \cite{PV17},  a flocking condition is derived for the C-S model with a single constant delay under hierarchical leadership (a special class of digraphs).
 However, to the best of our knowledge, the effect of multiple time varying delays on the C-S model under general digraphs has not been addressed in literature. This is our main motivation to introduce a delayed C-S model under general digraphs where communication delays are allowed to be time-varying but bounded by $\tau$.  Before we discuss our main results, we introduce several notation:
\begin{align}
\begin{aligned} \label{A-3}
&  \bx_i :=(x^1_{i},\cdots,x^d_{i}) \quad \mbox{and} \quad \bv_i :=(v^1_{i},\cdots, v^d_{i}), \quad i = 1, \cdots, N, \cr
& \bar{v}^k(t) :=\max_{\substack{1\leq i\leq N\\t-\tau\leq s\leq t}}v^k_{i}(s), \quad
\underline{v}^k (t) :=\min_{\substack{1\leq i\leq N\\t-\tau\leq s\leq t}}v^k_{i}(s), \quad k = 1, \cdots, d, \cr
& \D^k(t) := \bar{v}^k(t) - \underline{v}^k (t), \quad \D(t)=\max_{1\leq k \leq d}\D^k(t), \quad X(0) :=\max_{\substack{\chi_{ij} > 0 \\-\tau\leq s\leq0}}\|\bx_i(0)-\bx_j(s)\|.
\end{aligned}
\end{align}
Then, our sufficient condition for the mono-cluster flocking can be summarized in the following simple relation. For given initial data, time-delay and network topology, if there exists a positive constant $\rho$ such that
\[  \D(0) \leq  \frac{e^{-n^{\infty} \kappa {\gamma_g}(3\tau+2)}}{ 2\sqrt{d}\gamma_g(2\tau+1)(1+ n^{\infty} \kappa)^{\gamma_g}} \rho (\psi(X(0)+\rho))^{\gamma_g}, \]
where $\gamma_g$ and $n^{\infty}$ are the depth of the spanning tree and maximal size of neighbor set defined in \eqref{B-0} and \eqref{C-0}, respectively, then the relative velocities and positions satisfy mono-cluster flocking estimates (see Theorem \ref{T3.1} for continuous model):
\[ \sup_{t\geq0}\|\bx_i(t)-\bx_j(t)\|<\infty, \qquad   \lim_{t\rightarrow\infty}\|\bv_i(t)-\bv_j(t)\|=0, \textrm{ for all }1\leq i,j\leq N.  \]
For the discrete C-S model, similar result can be obtained.
\newline

The rest of this paper is organized as follows.  In Section \ref{sec:2}, we present preliminaries on graph theory and discuss our C-S model with time-delay on a digraph. In Section \ref{sec:3}, we present our two main results on flocking estimates for the continuous and discrete-time C-S models. In Section~\ref{sec:4}, we provide a proof of our first main result on the mono-cluster flocking dynamics of the continuous-time C-S model. In Section \ref{sec:5}, we provide a discrete analogue of the continuous time result. In Section \ref{sec:6}, we provide several numerical examples and compare them with our analytical results. Finally, Section \ref{sec:7} is devoted to the brief summary of our main results and future directions.

 \section{Preliminaries} \label{sec:2}
\setcounter{equation}{0}
In this section, we briefly describe our backbones for the interplay between a dynamic network and the flocking system on it. To fix the idea, we model network dynamics and system dynamics by a directed graph (in short, digraph) with a spanning tree, time-varying capacities, and the C-S model with time-delay, respectively.
\subsection{Network dynamics}\label{sec:2.1}
We discuss a theoretical minimum in a digraph which models the neighbor set of each agent and communication weights between agents. We will identify $i$ as the $i$-th agent.  A weighted digraph $\G=(\V(\G),\E(\G), \C(\G) )$ consists of a finite set $\V(\G)=\{1,\ldots, N\}$ of {\em vertices (nodes)}, a set $\E(\G) \subset\V(\G) \times \V(\G)$ of {\em arcs}, and a matrix $ \C(\G)$ whose elements represent communication (interaction) weights between agents.
If $(j,i)\in\E(\G)$, we say that $j$ is a {\em neighbor} of $i$, and we denote the neighbor set of the vertex $i$ by
$\N_i:=\{j:(j,i)\in\E(\G)\}$. We set
\[ \chi_{ij} = \begin{cases}
1, \quad & \mbox{if $j$ is a neighbor of $i$}, \cr
0, \quad & \mbox{otherwise.}
\end{cases}
\]

Throughout the paper, we exclude a self loop, i.e., $i \not \in \N_i$ for all $1\leq i \leq N$.
A {\em path} in $\G$ from $i_0$ to $i_p$ is a
sequence $i_0, i_1, \ldots,i_p$ of distinct vertices such that each successive pair of
vertices is an arc of the digraph. The integer $p$ (the number of its
arcs) is the {\em length} of the path.  If there exists a path from $i$ to $j$, then vertex~$j$ is said to be
{\em reachable} from vertex~$i$, and we define the distance from $i$ to $j$, $\dist(i,j)$,
as the length of a shortest path from $i$ to $j$.
A digraph~$\G$ is said to be
{\em strongly connected} if each vertex can be reachable from any
other vertex, whereas a weaker concept is that of the existence of a spanning tree.
We say that $\G$ has a {\em spanning tree} if we can find a vertex
(called a {\em root}) such that any
other vertex of~$\G$ is reachable from it.  For each root $r$ of digraph $\G$ with a spanning tree, we define $\max_{j\in\V}\dist(r,j)$
as the depth of the spanning tree of $\G$ rooted at $r$. The {\em smallest depth} of $\G$ is
\begin{equation} \label{B-0}
 \gamma_g:=\min_{r~ \mbox{is a root}}\max_{j\in\V}\dist(r,j).
\end{equation}
Thus, any agent in a digraph with a spanning tree can be connected to one of a root agent via no more than $\gamma_g$ intermediate agents. It is trivial to see $\gamma_g \leq N-1$. Moreover, $\gamma_g \leq\lfloor N/2 \rfloor$ when $\G$ is  undirected~\cite{AFH09}. We understand that $\gamma_g=\infty$, when $\G$ does not have a spanning tree. The $(j, i)$-element of the capacity matrix $\C(\G) = (c_{ij})$ denotes the interaction weight (amount of information) from the $j$-vertex  to $i$-vertex. In this paper, we use
\begin{equation} \label{B-0-1}
  c_{ij} = \psi(\|\bx_j(t-\tau_{ij}(t)) - \bx_i(t) \|)
 \end{equation}
 as an interaction weight.
\subsection{System dynamics} \label{sec:2.2}
Consider an autonomous multi-agent system consisting of $N$ interacting agents moving in Euclidean space $\R^d$, where the agents are not necessarily equal in social status and a leadership hierarchy can exist between agents.

Let $\N_i$ be the neighbor set of the agent $i$, consisting of the agents that directly influence $i$, and let $\tau_{ij}$ be a nonnegative bounded function representing the communication time-delay from $j$ to $i$ at time $t$. The communication network of the flock is denoted by a digraph $\G=(\V,\E, \C)$ such that $(j,i)\in\E$ if and only if $j\in\N_i$. We see that communication delays $\tau_{ij}(\cdot)$ appear in both position and velocity transmissions. Here self-delay is not allowed:
\begin{equation} \label{B-0-2}
 \tau_{ii}(\cdot)\equiv0 \quad \mbox{ for all}~ i \in \{1, \cdots, N\}.
\end{equation}
The weight function $\psi$ satisfies the conditions \eqref{A-2}. Since each agent adjusts its velocity by the delayed relative state measurements, the differences between its current state and that of other agents undergo some communication delays. Under the setting \eqref{B-0-1} and \eqref{B-0-2}, as a system dynamics, we consider the C-S model with time-dependent time-delay $\tau_{ij} = \tau_{ij}(t)$:
\begin{align*}
\begin{aligned} \label{B-1}
\dot{\bx}_i(t) &=\bv_i(t), \quad t > 0, \quad i = 1, \cdots, N, \\
\dot{\bv}_i(t) &=\sum\limits_{j=1}^{N} \chi_{ij} \psi(\|\bx_j(t -\tau_{ij}(t)) - \bx_i(t) \|) (\bv_j(t-\tau_{ij}(t))-\bv_i(t)).
\end{aligned}
\end{align*}
Note that the original C-S model in \cite{CS07} corresponds to the following setting:
\[ \chi_{ij} = 1, \quad 1 \leq i, j \leq N, \qquad \psi(r) = \frac{\kappa}{N(1 + r^2)^{\beta}}. \]
Throughout the paper, we assume that the time-varying delays $\tau_{ij}(\cdot)$ are uniformly bounded, i.e., there exists $\tau\geq0$ such that
\[ 0\leq\tau_{ij}(t)\leq\tau \quad \mbox{for all $t>0$ and $i, j$}. \]
Next, we consider the corresponding discrete-time analogue of the continuous system \eqref{A-1} which can be obtained via the forward Euler scheme:
\begin{align*}\label{B-2}
\begin{aligned}
\bx_i[t+1]&=\bx_i[t]+h\bv_i[t], \quad t \in \NN\cup\{0\}, \\
\bv_i[t+1]&=\bv_i[t]+ h\sum_{j = 1}^{N} \chi_{ij} \psi(\|\bx_j[t -\tau_{ij}[t]] - \bx_i[t] \|)
(\bv_j[t-\tau_{ij}[t]]-\bv_i[t]),
\end{aligned}
\end{align*}
where $h= \Delta t$ is the time step, $\bx_i[t] :=\bx_i(ht)$ and $\bv_i[t] :=\bv_i(ht)$ with $t\in\NN\cup\{0\}$. We also assume that time-delays $\tau_{ij}$ are integer-valued functions.

\section{Description of main results}
\setcounter{equation}{0} \label{sec:3}
In this section, we present our main results for the continuous and discontinuous C-S models, respectively. First, we recall the concept of flocking in the following definition.
\begin{definition} \label{D3.1}
Let  ${\mathcal Z}(t) := \{(\bx_i(t),\bv_i(t))\}_{i=1}^N$ be time-dependent C-S configuration. Then, ${\mathcal Z}(t)$ exhibits asymptotic {\it mono-cluster flocking} if the following two conditions hold:
\begin{enumerate}
\item
The relative velocities tend to zero asymptotically:
\[ \lim_{t\rightarrow\infty}\|\bv_i(t)-\bv_j(t)\|=0, \textrm{ for all }1\leq i,j\leq N. \]
\item
The relative positions are uniformly bounded in time:
\[
\sup_{t\geq0}\|\bx_i(t)-\bx_j(t)\|<\infty \textrm{ for all }1\leq i,j\leq N. \]
\end{enumerate}
\end{definition}
In the following two subsections, we present two main results for the emergence of mono-cluster flockings to the continuous and discrete C-S models. The detailed proofs will be provided in next two sections.
\subsection{The continuous C-S model}  \label{sec:3.1} Consider the following Cauchy problem:
\begin{equation} \label{C-1}
\begin{cases}
\dot{\bx}_i(t) =\bv_i(t),\quad t > 0, \quad i = 1, \cdots, N, \\
\dot{\bv}_i(t) =\sum\limits_{j = 1}^{N} \chi_{ij} \psi(\|\bx_j(t -\tau_{ij}(t)) - \bx_i(t) \|)
(\bv_j(t-\tau_{ij}(t))-\bv_i(t))\\
(\bx_i, \bv_i)(t) = (\bx^0_i(t), \bv^0_i(t)),\quad -\tau \leq t \leq 0.
\end{cases}
\end{equation}
Note that the neighbor set ${\mathcal N}_i$ of $i$-th particle satisfies
\[ {\mathcal N}_i := \{ j~: j \not = i, \quad \chi_{ij} > 0 \}. \]
We put the following standing assumption on the network structure throughout the paper: \newline

\noindent {\bf Assumption}: Suppose that $\G$ has a spanning tree and we denote its smallest depth by $\gamma_g$. Let $n^{\infty}$ be the maximal cardinality of the neighbor sets $\N_i$ for all $1 \leq i\leq N$:
\begin{equation} \label{C-0}
  n^{\infty} := \max_{1\leq i \leq N} |{\mathcal N}_i|,
\end{equation}
where $|A|$ is the cardinality of the set $A$.  Before we state our first main result, we set
\[ C_\infty :=  \frac{e^{-n^{\infty} \kappa {\gamma_g}(3\tau+2)}}{ 2\sqrt{d}\gamma_g(2\tau+1)(1+ n^{\infty} \kappa)^{\gamma_g}}. \]
Our first main result can be stated as follows.
\begin{theorem}\label{T3.1}
Suppose that for a given initial data $\{(\bx_i^0, \bv_i^0) \}$, maximal size of time-delay $\tau$ and network topology, there exists a positive constant $\rho>0$ such that
\begin{equation}\label{C-1-1}
 \D(0)\leq  C_\infty \rho (\psi(X(0)+\rho))^{\gamma_g}.
\end{equation}
Then,  for any solution $\{ (\bx_i, \bv_i) \}_{i=1}^{N}$ to \eqref{C-1}, we have an asymptotic mono-cluster flocking estimate:
\[ \sup_{t\geq0}\|\bx_i(t)-\bx_j(t)\|<\infty, \quad \lim_{t\rightarrow\infty}\|\bv_i(t)-\bv_j(t)\|=0, \textrm{ for all }~1\leq i,j\leq N.  \]
\end{theorem}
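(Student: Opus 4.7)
The plan is to prove Theorem \ref{T3.1} by a continuity/bootstrap argument whose non-trivial ingredient is a hierarchical contraction estimate for the velocity diameter $\D(t)$, propagated layer-by-layer along a spanning tree of $\G$. First I would introduce the bootstrap time
\[
T^\ast := \sup\Big\{ T \geq 0 \,:\, \max_{1\leq i,j\leq N}\;\sup_{-\tau \leq s, s' \leq T} \|\bx_i(s) - \bx_j(s')\| \leq X(0) + \rho \Big\},
\]
which is strictly positive by continuity of the initial data. On $[0, T^\ast]$ the monotonicity of $\psi$ in \eqref{A-2} yields the uniform lower bound $\psi(\|\bx_j(t - \tau_{ij}(t)) - \bx_i(t)\|) \geq \psi_\ast := \psi(X(0) + \rho)$, so each coordinate $v_i^k$ obeys a delayed linear averaging equation with weights bounded below by $\psi_\ast$. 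The goal is then to show that $\D$ decays exponentially on $[0, T^\ast]$ at a rate $\lambda > 0$ large enough that $\int_0^\infty \D(s)\, ds < \rho$, which will contradict $T^\ast < \infty$ via the identity $\bx_i(t) - \bx_j(t) = (\bx_i(0) - \bx_j(0)) + \int_0^t (\bv_i - \bv_j)\, ds$.

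The core step is the layer-by-layer contraction. Fix a coordinate index $k$ and a root $r$ of the spanning tree, and for $0 \leq \ell \leq \gamma_g$ let $V_\ell$ denote the set of vertices reachable from $r$ in at most $\ell$ arcs. I would prove, by induction on $\ell$, that for all $i \in V_\ell$ and all $t$ in an appropriate subwindow,
\[
v_i^k(t) \geq \underline{v}^k(t_0) + \eta_\ell\, \D(t_0), \qquad \eta_0 = 0,
\]
together with a symmetric upper bound relative to $\bar{v}^k(t_0)$. The induction step uses that each $i \in V_{\ell+1}\setminus V_\ell$ has a neighbor $j \in V_\ell \cap \N_i$, and integrates the velocity ODE on a window of length $2\tau + 1$ to convert the lower bound on $v_j^k$ at time $t - \tau_{ij}(t)$ into a lower bound on $v_i^k$; the time-delay loss is absorbed by the crude estimate $|\dot v_i^k(t)| \leq n^\infty \kappa\, \D(t-\tau)$ integrated over an interval of length at most $\tau$, producing an exponential factor $e^{-n^\infty \kappa(3\tau+2)}$ per layer. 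After $\gamma_g$ layers, the product of the per-layer increments, each proportional to $\psi_\ast/(1 + n^\infty \kappa)$, yields a contraction of the form
\[
\D(t_0 + \gamma_g(2\tau+1)) \leq (1 - \mu)\, \D(t_0), \qquad \mu := 2\sqrt{d}\, C_\infty (\psi_\ast)^{\gamma_g}\,\gamma_g(2\tau+1),
\]
the factor $2\sqrt{d}$ arising when passing from the single-coordinate diameter to the full vector one. Iterating produces the desired exponential decay of $\D(t)$.

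To close the bootstrap, the exponential decay and the integral identity above give $\|\bx_i(t) - \bx_j(t)\| \leq X(0) + \D(0)/\lambda$; the smallness assumption \eqref{C-1-1} is calibrated so that $\D(0)/\lambda \leq \rho$ strictly, ruling out $T^\ast < \infty$ and yielding both the boundedness of the relative positions and the convergence of the relative velocities required by Definition \ref{D3.1}. The main obstacle will be executing the hierarchical induction in the presence of time-varying delays: unlike the zero-delay all-to-all case, the convex-combination representation of $v_i^k(t)$ involves values at several distinct delayed times $t - \tau_{ij}(t)$, which must be compared to extremal velocities over a moving window of length $\tau$, while contraction is propagated through a directed and possibly asymmetric connection. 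The delicate bookkeeping needed to extract exactly one factor $\psi_\ast/(1 + n^\infty \kappa)$ and one controlled exponential delay loss per layer, so that the resulting product over $\gamma_g$ layers reproduces the constant $C_\infty$, is the analytic heart of the argument.
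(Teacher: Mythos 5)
Your overall architecture is the same as the paper's: a hierarchical propagation of a velocity bound along a spanning tree, one layer per window of length $2\tau+1$, with a Gronwall-type estimate absorbing the delay loss $e^{-n^\infty\kappa(3\tau+2)}$ and a gain $\psi(X(0)+\rho)/(1+n^\infty\kappa)$ per layer, closed by an a priori bound on positions (your continuity-time $T^\ast$ versus the paper's induction on the windows $[n\gamma_g(2\tau+1),(n+1)\gamma_g(2\tau+1)]$ — these are interchangeable, and your calibration $\sqrt{d}\,\gamma_g(2\tau+1)\D(0)/(1-\delta)\leq\rho$ is exactly how \eqref{C-1-1} enters).

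However, the induction scheme as you state it has a genuine gap at its base. You propose to show, for all $i\in V_\ell$, the \emph{two-sided} bound $\underline{v}^k(t_0)+\eta_\ell\,\D(t_0)\leq v_i^k(t)\leq \bar{v}^k(t_0)-\eta_\ell\,\D(t_0)$ with $\eta_0=0$. With $\eta_0=0$ the root carries no quantitative information, and the convex-combination structure of the delayed ODE then yields only $\eta_1=0$, and hence $\eta_\ell=0$ for every $\ell$: if $v_r^k$ happens to sit at $\bar{v}^k(t_0)$, nothing forces the velocities of its out-neighbors downward, and no contraction of $\D^k$ results. The missing ingredient is the midpoint dichotomy: either $v_r^k(t_0)\leq\bigl(\bar{v}^k(t_0)+\underline{v}^k(t_0)\bigr)/2$, in which case the root is at distance at least $\D^k(t_0)/2$ below the maximum and one propagates \emph{only} the upper bound $v_i^k\leq\bar{v}^k(t_0)-\alpha\,\D^k(t_0)/2$ through the tree (this alone contracts $\D^k$, since $\underline{v}^k$ is nondecreasing by Lemma \ref{L4.1}); or the reverse inequality holds and one applies the same argument to $-v^k$. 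This one-sided seeding is where the factor $\tfrac12$ in $C_\infty$ originates, and without it the per-layer gains you describe have nothing to multiply. The fix is routine once identified, but as written your induction proves no contraction. A secondary, smaller point: the positive lower bound $\psi_\ast=\psi(X(0)+\rho)$ must be applied to the \emph{delayed} relative positions $\|\bx_j(t-\tau_{ij}(t))-\bx_i(t)\|$, which is why $X(0)$ is defined with the supremum over $s\in[-\tau,0]$ and why the bootstrap quantity should control $\|\bx_i(t)-\bx_j(t-\tau_{ij}(t))\|$ rather than only synchronous differences; your definition of $T^\ast$ does handle this, but the closing step should be checked against that delayed quantity.
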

\begin{proof}
Since the proof is rather lengthy, we postpone it to Section \ref{sec:4.1}.
\end{proof}
\begin{remark} \label{R3.1}
1. In the literature, the study on the C-S type models with time-delay is all restricted to the special network topologies.
 In \cite{CH17, C-L, E-H-S, PT17}, flocking estimates for the C-S type models with time-delay on a complete network have been discussed using Lyapunov type approach. In a recent work \cite{PV17}, the C-S type model with time-delay in velocity component has been also discussed on a hierarchical network where the asymptotic velocities of particles are given by that of leader. Thus, our current framework is general enough to include complete network and hierarchical network.  Compared to the global approach based on the Lyapunov functionals in aforementioned literature, our current approach in Section \ref{sec:4} is local in the sense that we can see explicitly how information flow can affect the velocity flocking.

\noindent 2. Note that the result stated in Theorem \ref{T3.1} only says that the relative velocities tend to zero asymptotically, and our methodology employed in this paper does not provide an asymptotic flocking velocities.
\end{remark}
As a direct application of Theorem \ref{T3.1}, we have the following result for the C-S communication weight :
\[ \psi_{cs}(r) = \frac{\kappa}{(1 + r^2)^{\beta}}. \]
\begin{corollary}\label{C3.1}
Suppose that initial data $\{(\bx_i^0, \bv_i^0) \}$  and parameters $\gamma_g,~\beta$ satisfy one of the following conditions:
\[
\begin{cases}
\displaystyle  & 2 \beta \gamma_g < 1, \cr
\displaystyle   & 2 \beta \gamma_g  =  1,\quad \D(0) < \frac{e^{-n^{\infty} \kappa \gamma_g (3\tau+2)} \kappa^{\gamma_g}}{2\sqrt{d}\gamma_g (2\tau+1)(1+n^{\infty} \kappa)^{\gamma_g}}, \cr
\displaystyle  & 2 \beta \gamma_g  > 1, \quad \D(0) \leq R(\rho_+) := \frac{C_\infty \rho_+ \kappa^{\gamma_g}}{[ 1 + (X(0) + \rho_+)^2 ]^{\beta \gamma_g}},
 \end{cases}
 \]
where $\rho_+$ is given by the following relations:
\[
\rho_+ := \frac{ X(0) (1-\beta \gamma_g) + \sqrt{  \beta^2 \gamma_g^2 X(0)^2 + 2 \beta \gamma_g -1}}{2\beta \gamma_g-1}.
\]
Then, for any solution $\{ (\bx_i, \bv_i) \}_{i=1}^{N}$ to \eqref{C-1} with $\psi = \psi_{cs}$, we have an asymptotic mono-cluster flocking estimate:
\[ \sup_{t\geq0}\|\bx_i(t)-\bx_j(t)\|<\infty, \quad \lim_{t\rightarrow\infty}\|\bv_i(t)-\bv_j(t)\|=0, \textrm{ for all }~1\leq i,j\leq N.  \]
\end{corollary}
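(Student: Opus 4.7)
The plan is to apply Theorem \ref{T3.1} directly with the specific weight $\psi = \psi_{cs}$. With this choice the sufficient condition \eqref{C-1-1} reads $\D(0) \leq R(\rho)$, where
\[ R(\rho) := C_\infty\, \rho\, \bigl(\psi_{cs}(X(0)+\rho)\bigr)^{\gamma_g} = \frac{C_\infty\, \kappa^{\gamma_g}\, \rho}{\bigl(1 + (X(0)+\rho)^2\bigr)^{\beta\gamma_g}}. \]
The whole corollary thus reduces to a one-variable optimization problem: for each value of the exponent $2\beta\gamma_g$, determine the largest $\D(0)$ for which some $\rho > 0$ makes $\D(0) \leq R(\rho)$ hold. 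Since $R(0)=0$ and $R(\rho) \sim C_\infty \kappa^{\gamma_g}\,\rho^{\,1-2\beta\gamma_g}$ as $\rho\to\infty$, the critical threshold of the exponent is $2\beta\gamma_g = 1$, producing three natural regimes.

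In the subcritical regime $2\beta\gamma_g < 1$, one has $R(\rho)\to\infty$, so for any $\D(0)$ one may simply pick $\rho$ large enough to satisfy the hypothesis of Theorem \ref{T3.1}; no smallness condition on $\D(0)$ is required. In the critical regime $2\beta\gamma_g = 1$, the limit becomes $\lim_{\rho\to\infty} R(\rho) = C_\infty\kappa^{\gamma_g}$, and a brief check (differentiating $R$ and observing that the resulting first-order condition has no positive root when $X(0) \geq 0$) shows that $R$ is monotonically increasing on $(0,\infty)$, so the supremum $C_\infty\kappa^{\gamma_g}$ is approached but not attained. The required condition is therefore the strict inequality $\D(0) < C_\infty\kappa^{\gamma_g}$, which coincides with the stated threshold once the definition of $C_\infty$ is substituted.

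In the supercritical regime $2\beta\gamma_g > 1$, $R$ vanishes at both endpoints of $(0,\infty)$ and is positive in between, so it attains its maximum at some interior point $\rho_+$. Setting $R'(\rho)=0$ and clearing denominators leads to the quadratic
\[ (2\beta\gamma_g - 1)\rho^2 + 2X(0)(\beta\gamma_g - 1)\rho - (1+X(0)^2) = 0, \]
whose positive root (well-defined since the leading coefficient is positive) simplifies, after computing the discriminant as $\beta^2\gamma_g^2 X(0)^2 + 2\beta\gamma_g - 1$, to exactly the stated formula for $\rho_+$. The optimal sufficient condition then becomes $\D(0) \leq R(\rho_+)$, matching the corollary. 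The argument is almost entirely routine; the only step that deserves a little care is the algebraic simplification of the discriminant, which must collapse to the clean form above so that $\rho_+$ takes the advertised shape. Once the appropriate $\rho$ is identified in each regime, Theorem \ref{T3.1} immediately yields mono-cluster flocking.
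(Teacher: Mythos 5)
Your proposal is correct and follows essentially the same route as the paper: both reduce the corollary to maximizing $R(\rho)=C_\infty\kappa^{\gamma_g}\rho\,[1+(X(0)+\rho)^2]^{-\beta\gamma_g}$ over $\rho>0$, split into the three regimes according to the sign of $1-2\beta\gamma_g$, and in the supercritical case locate the maximizer as the positive root of the same quadratic, whose discriminant collapses to $\beta^2\gamma_g^2X(0)^2+2\beta\gamma_g-1$ exactly as you compute. The paper packages this analysis as its Lemma \ref{L4.3}, but the content is identical to yours.
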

\begin{proof}
We postpone its proof in Section \ref{sec:4.3}.
\end{proof}
\begin{remark} \label{R3.2}
As in the original C-S model, our results also show the existence of the critical exponent for the parameter $\beta$,  below which unconditional convergence holds. It is given in terms of the connectivity of the communication network.  When we consider the all-to-all network, we see that the critical exponent is $1/2$, the same with that in \cite{CH17, CS07}. In \cite{PV17},  for the C-S model with a single constant delay under hierarchical leadership,  unconditional convergence to flocking is established for $\beta\leq 1/2$.
\end{remark}

\subsection{The discrete C-S model} \label{sec:3.2} In this subsection, we consider the discrete-time C-S model:
\begin{align}
\begin{aligned} \label{C-2}
\bx_i[t+1]&=\bx_i[t]+h\bv_i[t], \quad t \in \NN\cup\{0\}, \\
\bv_i[t+1]&=\bv_i[t]+ h\sum_{j = 1}^{N} \chi_{ij} \psi(\|\bx_j[t -\tau_{ij}[t]] - \bx_i[t] \|)
(\bv_j[t-\tau_{ij}[t]]-\bv_i[t]).
\end{aligned}
\end{align}
Here $\tau_{ij}[t],\tau\in\NN\cup\{0\}$ and $0\leq\tau_{ij}[t]\leq \tau$ for all $t\in\NN\cup\{0\}$. We set

\[ {\bar C}_\infty :=  \frac{(1-h n^{\infty} \kappa)^{\gamma_g (3\tau +1)} h^{\gamma_g}}{ 2\sqrt{d} h \gamma_g(2\tau+1)(1+ n^{\infty} \kappa)^{\gamma_g}}. \]

\begin{theorem}\label{T3.2}
Suppose that initial data $\{(\bx_i^0, \bv_i^0) \}$ and parameters $h,~\kappa,~\gamma_g,~\beta$ satisfy the following conditions: there exists a positive constant $\rho>0$ such that
\[ 0<\kappa h<\frac{1}{n^{\infty}}, \quad  \D[0]\leq  {\bar C}_\infty \rho (\psi(X[0]+\rho))^{\gamma_g}. \]
Then,  for any solution $\{ (\bx_i, \bv_i) \}_{i=1}^{N}$ to \eqref{C-2}, we have an asymptotic mono-cluster flocking estimate:
\[ \sup_{t \in \NN}\|\bx_i[t]-\bx_j[t]\|<\infty, \quad \lim_{t\rightarrow\infty}\|\bv_i[t]-\bv_j[t]\|=0, \textrm{ for all }~1\leq i,j\leq N.  \]
\end{theorem}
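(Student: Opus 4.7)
The plan is to mirror the continuous-time proof of Theorem~\ref{T3.1} in Section~\ref{sec:4.1}, the new ingredient being that the step-size restriction $0<\kappa h<1/n^{\infty}$ lets one rewrite the velocity update as an honest convex combination. Concretely, setting $\psi_{ij}[t] := \psi(\|\bx_j[t-\tau_{ij}[t]]-\bx_i[t]\|)$, the second equation of \eqref{C-2} yields
\[
v_i^k[t+1] = \Bigl(1 - h\sum_{j=1}^N \chi_{ij}\psi_{ij}[t]\Bigr) v_i^k[t] + h\sum_{j=1}^N \chi_{ij}\psi_{ij}[t]\,v_j^k[t-\tau_{ij}[t]],
\]
and since $|\mathcal{N}_i|\leq n^\infty$ and $\psi\leq\kappa$, the self-coefficient is no less than $1 - hn^\infty\kappa>0$ while all cross-coefficients are nonnegative. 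Hence $v_i^k[t+1]$ lies in the convex hull of $v_i^k[t]$ and the $\{v_j^k[t-\tau_{ij}[t]] : j\in\mathcal{N}_i\}$, which immediately gives the monotonicity $\bar v^k[t+1]\leq\bar v^k[t]$ and $\underline v^k[t+1]\geq\underline v^k[t]$, so that $\D[t]\leq\D[0]$ for all $t\in\NN\cup\{0\}$.

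Next I run a bootstrap on positions. From $\|\bx_i[t+1]-\bx_j[s]\|\leq\|\bx_i[t]-\bx_j[s]\|+h\sqrt{d}\,\D[t]$ together with $\D[t]\leq\D[0]$, I show that $X[t]\leq X[0]+\rho$ holds for all $t$ provided the velocity diameter is summable with a sum below $\rho/(h\sqrt d)$. Granting this position bound, the uniform lower bound $\psi_{ij}[t]\geq \psi(X[0]+\rho)=:\phi_*>0$ follows. The hypothesis $\D[0]\leq \bar C_\infty\,\rho\,(\psi(X[0]+\rho))^{\gamma_g}$ is calibrated precisely so that, once exponential decay of $\D[\cdot]$ is shown at rate determined by $\bar{C}_\infty$, the cumulative drift $h\sum_t\D[t]$ stays below $\rho$, closing the bootstrap.

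Exponential decay of $\D[\cdot]$ is then established by propagating alignment along a spanning tree of depth $\gamma_g$. Fix a coordinate index $k\in\{1,\dots,d\}$, a reference time $t_0$, and a root $r$. By induction on graph distance $\ell=0,1,\dots,\gamma_g$ from $r$, I show that after roughly $\ell(3\tau+1)$ time-steps every agent at distance at most $\ell$ from $r$ has its $k$-th velocity component squeezed into a sub-interval of $[\underline v^k[t_0],\bar v^k[t_0]]$ contracted by a factor proportional to $(h\phi_*)^{\ell}$. The induction step combines: (i) the convex-combination formula, which pulls $v_i^k[\cdot+1]$ toward a neighbor value with weight at least $h\phi_*$; (ii) a look-back of up to $\tau$ steps to synchronize lagged time labels, producing the factors $(2\tau+1)$ and $(3\tau+1)$ in the time budget; and (iii) the bound $(1-hn^\infty\kappa)^{\gamma_g(3\tau+1)}$ on the accumulated self-weight after iterated updates, which is the discrete analogue of the continuous exponential $e^{-n^\infty\kappa\gamma_g(3\tau+2)}$. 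Iterating to $\ell=\gamma_g$ yields a strict contraction $\D[t_0+\gamma_g(3\tau+1)]\leq(1-\eta)\D[t_0]$ with $\eta$ matching $\bar C_\infty$, hence geometric decay.

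The main obstacle is precisely this discrete propagation step. With time-varying delays on a general digraph, one must track which lagged time-labels are actually accessible after iterating the update $\gamma_g$ times, and must verify that the repeated convex-combination coefficients retain the claimed lower bound; the combinatorial bookkeeping behind the $\gamma_g(3\tau+1)$ exponent in $\bar C_\infty$ is the delicate point. Once geometric decay of $\D[\cdot]$ is in hand, summability of $h\sum_t\D[t]$ closes the position bootstrap and yields both $\sup_{t\in\NN}\|\bx_i[t]-\bx_j[t]\|<\infty$ and $\lim_{t\to\infty}\|\bv_i[t]-\bv_j[t]\|=0$, completing the proof.
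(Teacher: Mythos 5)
Your proposal follows essentially the same route as the paper's proof: rewriting the velocity update as a convex combination under $0<\kappa h<1/n^{\infty}$ to obtain monotonicity of $\bar v^k$ and $\underline v^k$ (Lemma \ref{L5.1}), bootstrapping the position bound $X[0]+\rho$ from the induction hypothesis on the decay of $\D[\cdot]$, and propagating a per-hop contraction with weight $\geq h\psi(X[0]+\rho)$ along a spanning tree of depth $\gamma_g$ (Lemma \ref{L5.2} and Proposition \ref{P5.1}). The only cosmetic discrepancy is that the paper's induction runs over blocks of length $\gamma_g(2\tau+1)$ rather than $\gamma_g(3\tau+1)$ --- the exponent $3\tau+1$ enters only through the accumulated self-weight factor $(1-hn^{\infty}\kappa)^{\gamma_g(3\tau+1)}$ --- which your ``roughly'' already covers.
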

\begin{proof}
Since the proof is rather lengthy, we postpone it to Section \ref{sec:5.1}.
\end{proof}
As a direct application of Theorem \ref{T3.2}, we have the following flocking estimate for the C-S communication weight.
\begin{corollary}\label{C3.2}
Suppose that initial data $\{(\bx_i^0, \bv_i^0) \}$  and parameters $h,~\kappa,~\gamma_g,~\beta$ satisfy the following conditions:
\[ 0<\kappa h<\frac{1}{n^{\infty}}, \]
and
\[
\begin{cases}
\displaystyle  & 2 \beta \gamma_g < 1, \cr
\displaystyle   & 2 \beta \gamma_g  =  1, \quad \D[0] < \frac{(1-hn^{\infty} \kappa)^{\gamma_g(3\tau+1)}h^{\gamma_g-1} \kappa^{\gamma_g}}{2\sqrt{d}\gamma_g(2\tau+1)(1+n^{\infty} \kappa)^{\gamma_g}},  \cr
\displaystyle & 2 \beta \gamma_g  > 1, \quad \D[0] \leq \frac{ (1-h n^{\infty} \kappa)^{\gamma_g(2\tau+1)}h^{\gamma_g-1} \rho_+}{ 2\sqrt{d}\gamma_g(2\tau+1)(1+n^{\infty} \kappa)^{\gamma_g} } (\psi( X[0] + \rho_+   ))^{\gamma_g},
\end{cases}
 \]
where $\rho_+$ is given by the following relation:
\[
 \rho_+ := \frac{ X[0] (1-\beta \gamma_g) + \sqrt{  \beta^2 \gamma_g^2 X[0]^2 + 2 \beta \gamma_g -1}}{2\beta \gamma_g - 1}.
\]
Then, for any solution $\{ (\bx_i[t], \bv_i[t]) \}_{i=1}^{N}$ to \eqref{C-1} with $\psi = \psi_{cs}$, we have an asymptotic mono-cluster flocking estimate:
\[ \sup_{t\geq0}\|\bx_i[t]-\bx_j[t]\|<\infty, \quad \lim_{t\rightarrow\infty}\|\bv_i[t]-\bv_j[t]\|=0, \textrm{ for all }~1\leq i,j\leq N.  \]

\end{corollary}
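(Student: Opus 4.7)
The plan is to apply Theorem \ref{T3.2} with the explicit choice $\psi = \psi_{cs}$ and reduce the hypothesis to an analytic condition on $\rho$. With $\psi_{cs}(r)=\kappa/(1+r^2)^\beta$, the sufficient condition of Theorem \ref{T3.2} becomes: there exists $\rho>0$ with
\[
\D[0] \leq R(\rho) := \bar C_\infty \cdot \frac{\kappa^{\gamma_g}\,\rho}{\bigl(1+(X[0]+\rho)^2\bigr)^{\beta\gamma_g}}.
\]
So it suffices, in each of the three regimes, to verify that the stated bound on $\D[0]$ forces $\D[0]\le R(\rho)$ for some admissible $\rho$. The trichotomy is dictated by the large-$\rho$ asymptotic behavior of $R$, namely $R(\rho)\sim \bar C_\infty\kappa^{\gamma_g}\rho^{1-2\beta\gamma_g}$, which is exactly the same phenomenon that governs the continuous case treated in Corollary \ref{C3.1}; the only change is the prefactor $C_\infty\leadsto\bar C_\infty$.

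First I would handle the \emph{subcritical} case $2\beta\gamma_g<1$. Here $R(\rho)\to\infty$ as $\rho\to\infty$, so for any $\D[0]\ge 0$ one can choose $\rho$ large enough to satisfy the required inequality; no size condition on the initial data is needed. Next, the \emph{critical} case $2\beta\gamma_g=1$: now $\lim_{\rho\to\infty}R(\rho)=\bar C_\infty\kappa^{\gamma_g}$, and a direct computation shows this limit equals $\dfrac{(1-hn^\infty\kappa)^{\gamma_g(3\tau+1)}h^{\gamma_g-1}\kappa^{\gamma_g}}{2\sqrt d\gamma_g(2\tau+1)(1+n^\infty\kappa)^{\gamma_g}}$, so the strict inequality on $\D[0]$ given in the statement guarantees the existence of a finite $\rho$ with $R(\rho)\ge\D[0]$.

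The \emph{supercritical} case $2\beta\gamma_g>1$ is the one requiring actual work. Here $R$ is continuous on $(0,\infty)$, vanishes at the endpoints $\rho=0$ and $\rho=\infty$, and therefore attains a global maximum at some interior critical point $\rho_+$. I would locate $\rho_+$ via $(\log R)'(\rho)=0$, which reduces to
\[
\frac{1}{\rho} \;=\; \frac{2\beta\gamma_g\,(X[0]+\rho)}{1+(X[0]+\rho)^2},
\]
equivalent to the quadratic
\[
(2\beta\gamma_g-1)\,\rho^2 \;+\; 2X[0](\beta\gamma_g-1)\,\rho \;-\; (X[0]^2+1) \;=\; 0.
\]
Since $2\beta\gamma_g-1>0$ and the constant term is negative, the product of the roots is negative, so there is a unique positive root; solving the quadratic and simplifying the discriminant to $\beta^2\gamma_g^2 X[0]^2+2\beta\gamma_g-1$ yields exactly the formula for $\rho_+$ in the statement. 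Evaluating $R$ at this $\rho_+$ and comparing with the threshold on $\D[0]$ in the corollary completes the verification of the hypothesis of Theorem \ref{T3.2}, from which the mono-cluster flocking estimate follows at once.

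The only non-routine step is the quadratic optimization in the supercritical case, and it is completely parallel to the analogous computation underlying Corollary \ref{C3.1}; I do not anticipate any genuine obstacle beyond bookkeeping with the discrete constant $\bar C_\infty$ in place of $C_\infty$.
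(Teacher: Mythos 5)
Your proposal is correct and follows essentially the same route as the paper, whose entire proof is the remark that one repeats the argument of Corollary 3.1 with $C_\infty$ replaced by $\bar C_\infty$ --- precisely the reduction to $\D[0]\le R(\rho)$ and the quadratic optimization you describe. One minor caveat: your computation produces the factor $(1-hn^{\infty}\kappa)^{\gamma_g(3\tau+1)}$ (coming from $\bar C_\infty$) in the supercritical bound, whereas the corollary as printed shows $(1-hn^{\infty}\kappa)^{\gamma_g(2\tau+1)}$; this appears to be a typo in the paper's statement rather than a gap in your argument.
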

\begin{proof}
If we replace $C_\infty$ by ${\bar C}_\infty$, then the proof is exactly the same as that of Corollary \ref{C3.1}.
\end{proof}
\begin{remark}
For the case without delays, i.e., $\tau=0$, Theorems~\ref{T3.1} and \ref{T3.2} reduce to the results obtained in \cite{DQ17} but by a different approach. Also, the exponential convergence rate is explicitly given in our proof, showing that the delay can degrade the rate of convergence. We verify the theoretical analysis by numerical simulations in the next section.
\end{remark}

\section{The continuous Cucker-Smale model}\label{sec:4}
\setcounter{equation}{0}
In this section, we present an asymptotic flocking estimate for the continuous-time delayed C-S model~\eqref{C-1}.

\subsection{Basic estimates} \label{sec:4.1} In this subsection, we present several lemmas to be used in the proof of Theorem \ref{T3.1}. For the flocking estimate, it is convenient to deal with the system~\eqref{C-1} componentwise. Let $(x_i^k, v_i^k)$ be a $k$-th component of the position-velocity coordinate of the $i$-th agent. Then, it satisfies
\begin{align}
\begin{aligned} \label{D-1}
\dot{x}^k_{i}(t)&=v^k_{i}(t), \quad t > 0, \quad i = 1, \cdots, N,~~k = 1, \cdots, d, \\
\dot{v}^k_{i}(t)&=\sum_{j=1}^{N}  \chi_{ij} \psi(\|\bx_j(t -\tau_{ij}(t)) - \bx_i(t) \|)
(v^k_{j}(t-\tau_{ij}(t))-v^k_{i}(t)).
\end{aligned}
\end{align}
In next lemma, we study the monotonicity of $\bar{v}^k$ and $\underline{v}^k$.
\begin{lemma}\label{L4.1}
Let  $\{(\bx_i(t),\bv_i(t))\}_{i=1}^N$ be a solution to system~\eqref{D-1}. Then, $\bar{v}^k(t)$ and $\underline{v}^k(t)$ defined in \eqref{A-3} are nonincreasing and nondecreasing, respectively.
\end{lemma}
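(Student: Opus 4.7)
The plan is to prove the monotonicity statement for $\bar{v}^k$; the argument for $\underline{v}^k$ is symmetric (replace max by min and reverse every inequality). I would first reduce the claim to the following pointwise bound: for every fixed $t_1\geq 0$, every index $i$, and every $t\geq t_1$, one has $v_i^k(t)\leq \bar{v}^k(t_1)$. Granted this, the sliding window $[t-\tau,t]$ at any later time $t\geq t_1$ splits into an initial-data piece contained in $[t_1-\tau,t_1]$ (on which $v_i^k$ is bounded by $\bar{v}^k(t_1)$ by definition) and a dynamic piece in $[t_1,t]$ (on which the pointwise bound applies), hence $\bar{v}^k(t)\leq \bar{v}^k(t_1)$.

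To establish the pointwise bound, I would use a barrier/comparison argument. Write $M:=\bar{v}^k(t_1)$ and, for an arbitrary $\varepsilon>0$, compare $v_i^k$ with the strictly increasing barrier
\[
\tilde{M}(t):=M+\varepsilon(1+t-t_1),\qquad t\geq t_1.
\]
If $v_i^k(t)>\tilde{M}(t)$ ever happens beyond $t_1$, let $T^*$ be the infimum of such times. Since $v_i^k(t_1)\leq M<\tilde{M}(t_1)$ and $v_i^k$ is continuous, one has $T^*>t_1$; a pigeonhole on the finite index set yields an index $i^*$ together with a sequence $t_n\downarrow T^*$ such that $v_{i^*}^k(T^*)=\tilde{M}(T^*)$ and $v_{i^*}^k(t_n)>\tilde{M}(t_n)$.

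The key step is to evaluate the velocity equation in \eqref{D-1} at $(i^*,T^*)$. Each delayed argument $T^*-\tau_{i^*j}(T^*)$ lies in the window $[T^*-\tau,T^*]$, and I would show $v_j^k(T^*-\tau_{i^*j}(T^*))\leq \tilde{M}(T^*)$ by splitting the window: for $s\in[T^*-\tau,t_1]$ we have $v_j^k(s)\leq M\leq \tilde{M}(T^*)$ by definition of $M$, while for $s\in[t_1,T^*]$ we have $v_j^k(s)\leq \tilde{M}(s)\leq \tilde{M}(T^*)$ by monotonicity of $\tilde{M}$ and the definition of $T^*$. Combined with $v_{i^*}^k(T^*)=\tilde{M}(T^*)$, the right-hand side becomes a sum of nonpositive terms, so $\dot{v}_{i^*}^k(T^*)\leq 0$, whereas $\dot{\tilde{M}}(T^*)=\varepsilon>0$. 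Because the delayed right-hand side is continuous in $t$, $v_{i^*}^k\in C^1$, so a first-order expansion yields $v_{i^*}^k(T^*+h)-\tilde{M}(T^*+h)\leq -\varepsilon h+o(h)<0$ for all small $h>0$, contradicting $v_{i^*}^k(t_n)>\tilde{M}(t_n)$. Sending $\varepsilon\downarrow 0$ closes the argument.

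The main technical obstacle is precisely the control of the delayed values $v_j^k(T^*-\tau_{i^*j}(T^*))$ at the critical instant, which forces the window-splitting step above and explains why a strictly monotone barrier $\tilde{M}$ (rather than a constant $M+\varepsilon$) is needed to obtain the strict sign separation $\dot{v}_{i^*}^k(T^*)\leq 0<\dot{\tilde{M}}(T^*)$ between the two first-order derivatives. Once this is in place, the rest is routine, and the symmetric argument with $\tilde{m}(t):=\underline{v}^k(t_1)-\varepsilon(1+t-t_1)$ gives the nondecreasing property of $\underline{v}^k$.
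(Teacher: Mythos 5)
Your argument is correct, and its engine is the same as the paper's: at the first instant where the running maximum would be exceeded, every delayed value $v_j^k(t-\tau_{ij}(t))$ lies in the window $[t-\tau,t]$ and is therefore dominated by the value of the exceeding agent, so the right-hand side of the second equation of \eqref{D-1} is a sum of nonpositive terms. The difference is in execution. The paper works directly on $\bar v^k$: it assumes $D^+\bar v^k(t^*)>0$, asserts the maximum must then be attained by some $v_{i^*}^k$ at the right endpoint $t^*$ of the window, and reads off $\dot v_{i^*}^k(t^*)\le 0$ as a contradiction --- a step that tacitly invokes the (standard but unstated) fact that the Dini derivative of a running maximum is controlled by the derivatives of the attaining functions. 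You instead prove the stronger pointwise bound $v_i^k(t)\le\bar v^k(t_1)$ for all $t\ge t_1$ via an $\varepsilon$-barrier and a first-crossing argument on the individual components, then transfer it to the windowed maximum by splitting $[t-\tau,t]$ into its portion inside $[t_1-\tau,t_1]$ and its portion in $[t_1,t]$. This costs a little more writing but is fully rigorous: the strictly increasing barrier yields the clean separation $\dot v_{i^*}^k(T^*)\le 0<\varepsilon=\dot{\tilde M}(T^*)$ at the crossing time, so no Dini-derivative machinery is needed, and the window-splitting step (valid because $T^*-\tau\ge t_1-\tau$) makes explicit precisely the point the paper's one-line contradiction glosses over. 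Your version can therefore be read as a careful justification of the paper's proof rather than a genuinely different one.
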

\begin{proof}
(i) First, we show that $\bar{v}^k(t)$ is non-increasing. Suppose that this is not the case. Then there must be some instant $t^*$ such that the upper right-hand Dini derivative of $\bar{v}^k(t)$ at $t^*$ is positive, i.e., $D^+\bar{v}^k(t^*)>0$ which means that $\bar{v}^k(t)$ is strictly increasing at $t^*$. Thus, by definition of $\bar{v}^k(t)$, there exists an agent $i^*$ such that $\bar{v}^k(t^*)=v^k_{i^*}(t^*)$ since otherwise  $\bar{v}^k(t)$ is not increasing at $t^*$ by continuity. However, system~$\eqref{D-1}_2$ implies
\[
\dot{v}^k_{i^*}(t^*) = \sum_{j=1}^{N}  \chi_{i^*j} \psi(\|\bx_j(t^* -\tau_{i^*j}(t^*)) - \bx_{i^*}(t^*) \|)  (v^k_{j}(t^*-\tau_{i^*j}(t^*))-v^k_{i^*}(t^*)) \leq 0,
\]
which yields contradiction. Thus, we have the monotone decreasing property of $\bar{v}^k(t)$. \newline

\noindent (ii) The case for $\underline{v}^k(t)$ can be treated similarly. Hence, we omit its details.
\end{proof}
\begin{remark} \label{R4.1} The result of Lemma \ref{L4.1} yields the monotone decreasing property of ${\mathcal D}^k(t)$:
\[ ({\mathcal D}^k(t_2) - {\mathcal D}^k(t_1)) (t_2 - t_1) \leq 0, \quad t_1, t_2 \in (0, \infty). \]
\end{remark}

\bigskip

In next lemma, we show that there exists a strictly increasing upper envelope profile for $v_i^k$ which may oscillate due to the time-delay.
\begin{lemma}\label{L4.2}
Let  $\{(\bx_i(t),\bv_i(t))\}_{i=1}^N$ be a solution to system~\eqref{D-1}.  If for $t_2\geq t_1\geq0, i \in \{1, \cdots, N \}$ and $1 \leq k \leq d$, there exists a positive constant $\alpha_c$ such that
\begin{equation} \label{D-3}
 v^k_{i}(t_2)\leq\bar{v}^k(t_1)- \alpha_c \frac{\D^k(t_1)}{2},
\end{equation}
then for all $t \geq t_2$,
\[ v^k_{i}(t)\leq\bar{v}^k(t_1)- \alpha_c e^{-n^{\infty} \kappa (t-t_2)}  \frac{\D^k(t_1)}{2}. \]
\end{lemma}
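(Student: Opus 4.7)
The plan is to reduce the delayed inequality for $v^k_i(t)$ to a single scalar Grönwall estimate by comparing $v^k_i$ with the fixed upper bound $\bar v^k(t_1)$.

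First I would exploit Lemma~\ref{L4.1} to pin down the delayed velocity arguments. Because $\bar v^k$ is non-increasing, for every $t\geq t_1$ and any neighbour index $j$ the shifted time $t-\tau_{ij}(t)$ lies in $[t_1-\tau,\infty)$. If $t-\tau_{ij}(t)\in[t_1-\tau,t_1]$ the bound $v^k_j(t-\tau_{ij}(t))\leq \bar v^k(t_1)$ follows directly from the definition of $\bar v^k(t_1)$; if $t-\tau_{ij}(t)>t_1$, we instead use $v^k_j(s)\leq\bar v^k(s)\leq\bar v^k(t_1)$. In both cases
\[
v^k_j(t-\tau_{ij}(t))\leq \bar v^k(t_1)\quad\text{for all }j\text{ and all }t\geq t_1.
\]

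Next I would plug this into equation~\eqref{D-1}$_2$. Setting $\Phi_i(t):=\sum_{j=1}^N\chi_{ij}\psi(\|\bx_j(t-\tau_{ij}(t))-\bx_i(t)\|)$, the sign of the bracket in the nonlinearity becomes controlled and we obtain the one-variable differential inequality
\[
\dot v^k_i(t)\leq \Phi_i(t)\bigl(\bar v^k(t_1)-v^k_i(t)\bigr),\qquad t\geq t_2,
\]
with $0\leq\Phi_i(t)\leq n^{\infty}\kappa$ by \eqref{A-2} and \eqref{C-0}. Introducing the gap $w(t):=\bar v^k(t_1)-v^k_i(t)$ converts this to $\dot w(t)\geq -\Phi_i(t)\,w(t)$.

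Finally I would apply the standard Grönwall comparison to $w$: the function $W(t):=w(t)\exp\!\bigl(\int_{t_2}^t\Phi_i(s)\,ds\bigr)$ is non-decreasing, hence $W(t)\geq W(t_2)=w(t_2)$, and using $\Phi_i\leq n^{\infty}\kappa$ one gets
\[
w(t)\geq w(t_2)\,e^{-n^{\infty}\kappa(t-t_2)}\geq \alpha_c\,\frac{\D^k(t_1)}{2}\,e^{-n^{\infty}\kappa(t-t_2)},
\]
where the last inequality is the hypothesis \eqref{D-3}. Rearranging yields the claimed envelope bound on $v^k_i(t)$. There is no genuine obstacle in this argument: the only subtle point is checking that the delayed velocity values are controlled by $\bar v^k(t_1)$ for every admissible shift, which is exactly what the monotonicity from Lemma~\ref{L4.1} together with the definition of $\bar v^k$ as a running $\tau$-window maximum is designed to give.
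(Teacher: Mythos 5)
Your argument is correct and is essentially the paper's proof: both bound the delayed velocities $v^k_j(t-\tau_{ij}(t))$ by $\bar v^k(t_1)$ via the monotonicity from Lemma~\ref{L4.1}, reduce $\eqref{D-1}_2$ to the scalar differential inequality $\dot v^k_i(t)\leq n^{\infty}\kappa\,(\bar v^k(t_1)-v^k_i(t))$, and conclude by Gr\"onwall together with the hypothesis \eqref{D-3}. The only cosmetic difference is that you retain the time-dependent coefficient $\Phi_i(t)$ until the final step, while the paper replaces it by $n^{\infty}\kappa$ at the outset.
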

\begin{proof} We use the non-increasing property of $\bar{v}^k(t)$ in Lemma \ref{L4.1}, \eqref{C-0}, \eqref{D-1} and that
\[ \psi(\|\bx_j(t -\tau_{ij}(t)) - \bx_i(t) \|) \leq  \kappa \]
to see
\begin{align}
\begin{aligned} \label{D-4}
\dot{v}^k_{i}(t)&=\sum\limits_{j=1}^{N} \chi_{ij} \psi(\|\bx_j(t -\tau_{ij}(t)) - \bx_i(t) \|)
(v^k_{j}(t-\tau_{ij}(t))-v^k_{i}(t))\\
&\leq n^{\infty} \kappa (\bar{v}^k(t_1)-v^k_{i}(t)) =-n^{\infty} \kappa (v^k_{i}(t)-\bar{v}^k(t_1)), \quad t\geq t_2.
\end{aligned}
\end{align}
Then, we apply Gronwall's lemma for \eqref{D-4}, and use \eqref{D-3} to obtain
\[
v^k_{i}(t) \leq \bar{v}^k (t_1)+e^{-n^{\infty} \kappa (t-t_2)}(v^k_{i}(t_2)-\bar{v}^k(t_1)) \leq \bar{v}^k(t_1)- \alpha_c e^{-n^{\infty} \kappa (t-t_2)} \frac{\D^k (t_1)}{2}, \quad t \geq t_2.
\]
\end{proof}
In next proposition, we study how the velocity component of an agent is affected by the neighboring agents via the information flow.
\begin{proposition}\label{P4.1}
Suppose that the agent $i$ satisfies the hypothesis \eqref{D-3} of Lemma \ref{L4.2} and $i$ is a neighbor of $j$, i.e., $i\in\N_j$. Then,  we have
\begin{equation} \label{D-4-0}
v^k_{j}(t_2+\tau+1)\leq\bar{v}^k(t_1)-\frac{\underline{\psi} \alpha_c e^{-2n^{\infty} \kappa(\tau+1)}}{1+n^{\infty} \kappa}\frac{\D^k(t_1)}{2},
\end{equation}
where the positive constant $\underline{\psi}$ is such that $\psi(\|\bx_j(t -\tau_{ij}(t)) - \bx_i(t) \|) \geq\underline{\psi}$ for all $t_2\leq t\leq t_2+\tau+1$.
\end{proposition}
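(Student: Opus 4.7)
The strategy is to propagate the exponentially decaying velocity bound from the informant $i$ (supplied by Lemma \ref{L4.2}) to its downstream neighbor $j$ through the time-delayed coupling in the equation for $\dot{v}_j^k$. Since $i \in \N_j$ gives $\chi_{ji}=1$, the first step is to isolate the informant's contribution $\chi_{ji}\psi_{ji}(t)(v_i^k(t-\tau_{ji}(t))-v_j^k(t))$ in the sum defining $\dot{v}_j^k(t)$, where I write $\psi_{ji}(t):=\psi(\|\bx_i(t-\tau_{ji}(t))-\bx_j(t)\|)$ for brevity. For every other term, the monotone decreasing property of $\bar{v}^k$ from Lemma \ref{L4.1} (together with the definition \eqref{A-3}) delivers the trivial upper bound $v_l^k(t-\tau_{jl}(t))\leq\bar{v}^k(t_1)$, and for the informant term I would plug in the sharpened bound from Lemma \ref{L4.2}.

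The bound from Lemma \ref{L4.2} requires its argument to be $\geq t_2$, so I would restrict attention to the working interval $t\in[t_2+\tau,\,t_2+\tau+1]$, on which $\tau_{ji}(t)\leq\tau$ forces $t-\tau_{ji}(t)\geq t_2$ and, simultaneously, $t-\tau_{ji}(t)-t_2\leq\tau+1$ yields a uniform lower bound on the exponential factor. Introducing the nonnegative quantity $w(t):=\bar{v}^k(t_1)-v_j^k(t)$ and using $\psi_{ji}(t)\geq\underline{\psi}$ together with $\sum_l\chi_{jl}\psi_{jl}(t)\leq n^\infty\kappa$, I expect to obtain a scalar linear differential inequality of the type
\[
\dot{w}(t) + n^\infty\kappa\,w(t) \;\geq\; \underline{\psi}\,\alpha_c\,e^{-n^\infty\kappa(\tau+1)}\,\frac{\D^k(t_1)}{2},\qquad t\in[t_2+\tau,\,t_2+\tau+1].
\]

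The concluding step is to multiply by the integrating factor $e^{n^\infty\kappa t}$, integrate over the unit-length window $[t_2+\tau,t_2+\tau+1]$, and drop the nonnegative term $e^{n^\infty\kappa(t_2+\tau)}w(t_2+\tau)\geq 0$. This yields a prefactor $(1-e^{-n^\infty\kappa})/(n^\infty\kappa)$, which via the elementary inequality $e^x\geq 1+x$ is bounded below by $1/(1+n^\infty\kappa)$; this accounts for the denominator appearing in \eqref{D-4-0}. A routine bookkeeping of the exponential factors (with any slack absorbed into the looser factor $e^{-2n^\infty\kappa(\tau+1)}$ stated in the conclusion) delivers the announced inequality.

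The main obstacle is the transport lag intrinsic to the time delay: Lemma \ref{L4.2} is only usable once the delayed argument $t-\tau_{ji}(t)$ surpasses $t_2$, and this is only guaranteed for $t\geq t_2+\tau$. Consequently the sharpened inequality for $w$ is available only on $[t_2+\tau,t_2+\tau+1]$ rather than on all of $[t_2,t_2+\tau+1]$, which is precisely what forces the shift ``$+\tau+1$'' on the left-hand side of \eqref{D-4-0}. When the proposition is iterated along a path of length $\gamma_g$ in the spanning tree, each edge will contribute an additional $\tau+1$ delay and an additional attenuation factor of the form $\underline{\psi}/(1+n^\infty\kappa)$; this is the geometric origin of the factors $\gamma_g(2\tau+1)$ and $(1+n^\infty\kappa)^{\gamma_g}$ that appear in the flocking constant $C_\infty$.
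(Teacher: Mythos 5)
Your proposal is correct, and in its key step it takes a genuinely different route from the paper. The paper splits into two cases: either $v^k_j$ already dips below the threshold $\bar{v}^k(t_1)-\alpha_c e^{-n^{\infty}\kappa(\tau+1)}\D^k(t_1)/2$ at some $t^*$ in the window (in which case Lemma \ref{L4.2} is re-applied from $t^*$, costing a second factor $e^{-n^{\infty}\kappa(\tau+1)}$), or it stays above, in which case a contradiction argument on $[t_2+\tau,t_2+\tau+1]$ locates such a $t^*$ anyway, again followed by Lemma \ref{L4.2}. Your single integrating-factor (Gronwall) argument for $w(t)=\bar v^k(t_1)-v^k_j(t)$ on $[t_2+\tau,t_2+\tau+1]$ subsumes both cases at once: the differential inequality $\dot w+n^{\infty}\kappa w\geq \underline{\psi}\,\alpha_c e^{-n^{\infty}\kappa(\tau+1)}\D^k(t_1)/2$ holds unconditionally there, and since you never reinvoke Lemma \ref{L4.2} after the window, you land on the sharper constant $e^{-n^{\infty}\kappa(\tau+1)}$ in place of the paper's $e^{-2n^{\infty}\kappa(\tau+1)}$ (your bound implies \eqref{D-4-0}, as you note). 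The prefactor $(1-e^{-n^{\infty}\kappa})/(n^{\infty}\kappa)\geq 1/(1+n^{\infty}\kappa)$ correctly reproduces the denominator, which the paper instead obtains from the crude replacement $1+\underline{\psi}+(n^{\infty}-1)\kappa\leq 1+n^{\infty}\kappa$.

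One point deserves an explicit line in a full write-up, since it is exactly the issue the paper's Case A/Case B split is designed to dodge: when you lower-bound the informant term $\psi_{ji}(t)\bigl(v^k_j(t)-v^k_i(t-\tau_{ji}(t))\bigr)\geq \psi_{ji}(t)\bigl(A-w(t)\bigr)$ with $A:=\alpha_c e^{-n^{\infty}\kappa(\tau+1)}\D^k(t_1)/2$, you may only replace $\psi_{ji}(t)$ by $\underline{\psi}$ when $A-w(t)\geq 0$. The inequality $\psi_{ji}(t)(A-w(t))\geq \underline{\psi}A-\kappa w(t)$ nevertheless holds in both sign regimes (use $\psi_{ji}\geq\underline{\psi}$ and $w\geq0$ when $A-w\geq0$; use $\psi_{ji}\leq\kappa$ and $A\geq0$ when $A-w<0$), so your claimed differential inequality is true as stated — but the two-line verification should appear, as it is the only place where $\underline{\psi}$ versus $\kappa$ matters.
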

\begin{proof}
It follows from Lemma \ref{L4.2} that for $t \in [t_2, t_2 + \tau + 1]$,
\[ v^k_{i}(t)\leq \bar{v}^k(t_1)- \alpha_c e^{-n^{\infty} \kappa (t-t_2)}  \frac{\D^k(t_1)}{2} \leq  \bar{v}^k(t_1)- \alpha_c e^{-n^{\infty} \kappa (\tau + 1)}  \frac{\D^k(t_1)}{2} =: \Delta(t_1). \]
Let $i$ be a neighbor of $j$, i.e., $i\in\N_j$. Then, depending on the relative size of $v_j^k$ with $ \bar{v}^k(t_1)- \alpha_c e^{-n^{\infty} \kappa (\tau + 1)}  \frac{\D^k(t_1)}{2}$ in the time interval $[t_2,t_2+\tau+1]$, we consider the following two cases:
\begin{align}
\begin{aligned} \label{D-4-1}
& \mbox{Case A}:~\exists~ t^* \in [t_2, t_2 + \tau + 1] \quad \mbox{such that}~v^k_{j}(t^*)\leq \Delta(t_1). \\
& \mbox{Case B}:~\forall~t \in [t_2,t_2+\tau+1], \quad v^k_{j}(t) > \Delta(t_1).
\end{aligned}
\end{align}

\vspace{0.2cm}

\noindent $\bullet$~Case A: In this case, we use $\eqref{D-4-1}_1$ and the result of Lemma \ref{L4.2} to see that for all $ t\geq t^*$,
\begin{equation} \label{D-4-2}
 v^k_{j}(t)\leq\bar{v}^k(t_1)- \alpha_c e^{-n^{\infty} \kappa (t-t^*+\tau+1)} \frac{\D^k(t_1)}{2}.
\end{equation}
We choose $t = t_2+\tau+1$ in \eqref{D-4-2} and use $t_2 - t^* + 2(\tau + 1) \leq 2(\tau + 1)$ to get
\begin{equation}\label{D-5}
v^k_{j}(t_2+\tau+1)\leq\bar{v}^k (t_1)- \alpha_c e^{-2n^{\infty} \kappa (\tau+1)} \frac{\D^k(t_1)}{2}.
\end{equation}

\noindent $\bullet$~Case B: In this case, for all $t\in[t_2,t_2+\tau+1]$, we have
\[
v^k_{j}(t)>\bar{v}^k (t_1)- \alpha_c e^{-n^{\infty} \kappa (\tau+1)} \frac{\D^k(t_1)}{2}
\]
For $t\in[t_2+\tau,t_2+\tau+1]$, we use \eqref{D-1} and the monotonicity relation of ${\bar v}^k$ in Lemma \ref{L4.1}:
\[ v^k_{l}(t-\tau_{jl}(t)) \leq {\bar v}^k(t)  \leq  {\bar v}^k(t_2) \quad \mbox{and} \quad  {\bar v}^k(t_2) \leq  {\bar v}^k(t_1)  \]
to see
\begin{align}
\begin{aligned} \label{D-6}
\frac{d}{dt} v^k_{j}(t)&=\sum_{l\in\N_j}  \chi_{jl} \psi(\|\bx_l(t -\tau_{jl}(t)) - \bx_j(t) \|)
(v^k_{l}(t-\tau_{jl}(t))-v^k_{j}(t))\\
&=  \psi(\|\bx_i(t -\tau_{ji}(t)) - \bx_j(t) \|)
(v^k_{i}(t-\tau_{ji}(t))-v^k_{j}(t)) \\
& +  \sum_{\substack{l\in\N_j \\ l \not = i}}  \chi_{jl} \psi(\|\bx_l(t -\tau_{ij}(t)) - \bx_i(t) \|)
(v^k_{l}(t-\tau_{jl}(t))-v^k_{j}(t)) \\
&\leq  \psi(\|\bx_i(t -\tau_{ji}(t)) - \bx_j(t) \|)
(v^k_{i}(t-\tau_{ji}(t))-v^k_{j}(t)) +(n^{\infty}-1) \kappa (\bar{v}^k(t_2)-v^k_{j}(t))\\
&\leq  \psi(\|\bx_i(t -\tau_{ji}(t)) - \bx_j(t) \|) \left(\bar{v}^k(t_1)- \alpha_c e^{-n^{\infty} \kappa (\tau+1)} \frac{\D^k(t_1)}{2}-v^k_{j}(t)\right) \\
& +(n^{\infty}-1) \kappa ({\bar v}^k(t_2)-v^k_{j}(t))\\
&\leq \underline{\psi}\left(\bar{v}^k(t_1)- \alpha_c e^{-n^{\infty} \kappa (\tau+1)} \frac{\D^k (t_1)}{2}-v^k_{j}(t)\right) +(n^{\infty}-1)\kappa (\bar{v}^{k}(t_1)-v^k_{j}(t))\\
&=-\Big(\underline{\psi}+(n^{\infty}-1)\kappa \Big)
\Bigg[v^k_{j}(t) -\left(\bar{v}^k(t_1)-\frac{\underline{\psi} \alpha_c e^{-n^{\infty} \kappa (\tau+1)}}{\underline{\psi}+(n^{\infty}-1)\kappa}
\frac{\D^k(t_1)}{2}\right)\Bigg] \\
&=-\Big(\underline{\psi}+(n^{\infty}-1)\kappa \Big)
\Bigg[v^k_{j}(t) -\left(\bar{v}^k(t_1)-\frac{\underline{\psi} \alpha_c e^{-n^{\infty} \kappa (\tau+1)}}{1 + \underline{\psi}+(n^{\infty}-1)\kappa}
\frac{\D^k(t_1)}{2}\right)\Bigg] \\
&-\frac{\underline{\psi} \alpha_c e^{-n^{\infty} \kappa (\tau+1)}}{1 + \underline{\psi}+(n^{\infty}-1)\kappa}\frac{\D^k(t_1)}{2}
\end{aligned}
\end{align}
Next, we claim that there exists an instant $t^*\in[t_2+\tau,t_2+\tau+1]$ such that
\begin{equation}\label{D-7}
v^k_{j}(t^*)\leq\bar{v}^k(t_1)-\frac{\underline{\psi} \alpha_c e^{-n^{\infty} \kappa(\tau+1)}}{1+\underline{\psi}+(n^{\infty}-1)\kappa}
\frac{\D^k(t_1)}{2}.
\end{equation}
If this is not the case, that is, for all $t\in[t_2+\tau,t_2+\tau+1]$, we have
\begin{equation}\label{D-8}
v^k_{j}(t)>\bar{v}^k(t_1)-\frac{\underline{\psi} \alpha_c e^{-n^{\infty} \kappa (\tau+1)}}{1+\underline{\psi}+(n^{\infty}-1)\kappa}
\frac{\D^k(t_1)}{2}.
\end{equation}
Then it follows from \eqref{D-6} and \eqref{D-8} that
\begin{equation} \label{D-9}
\frac{d}{dt} v^k_{j}(t)\leq
-\frac{\underline{\psi} \alpha_c e^{-n^{\infty} \kappa (\tau+1)}}{1+\underline{\psi}+(n^{\infty}-1)\kappa}
\frac{\D^k(t_1)}{2}.
\end{equation}
We integrate the above relation \eqref{D-9} from $t_2+\tau$ to $t_2+\tau+1$ and use
\[ v^k_{j}(t_2 + \tau) \leq {\bar v}^k_{j}(t_2 + \tau) \leq \bar{v}^k(t_1) \]
to get
\[
v^k_{j}(t_2+\tau+1)\leq\bar{v}^k(t_1)-\frac{\underline{\psi} \alpha_c e^{-n^{\infty} \kappa (\tau+1)}}{1+\underline{\psi}+(n^{\infty}-1)\kappa}
\frac{\D^k(t_1)}{2},
\]
which gives a contradiction to \eqref{D-8}. Hence, we have \eqref{D-7}. Since $\underline{\psi} - \kappa < 0$, the relation \eqref{D-7} yields that there exists an instant $t^*\in[t_2+\tau,t_2+\tau+1]$ such that
\begin{equation} \label{D-10}
v^k_{j}(t^*) \leq \bar{v}^k(t_1)-\frac{\underline{\psi} \alpha_c e^{-n^{\infty} \kappa (\tau+1)}}{1+ n^{\infty} \kappa}
\frac{\D^k(t_1)}{2}.
\end{equation}
Finally,  we use Lemma \ref{L4.2} and \eqref{D-10} to obtain
\begin{equation}\label{D-11}
v^k_{j}(t_2+\tau+1)  \leq \bar{v}^k(t_1)-\frac{\underline{\psi} \alpha_c e^{-n^{\infty} \kappa ( t_2 + \tau + 1 - t^* + \tau+1)}}{1+ n^{\infty} \kappa}
\frac{\D^k(t_1)}{2}.
\end{equation}
We again use \eqref{D-11} and the relation
\[ e^{-n^{\infty} \kappa ( t_2 + \tau + 1 - t^* + \tau+1)} = \underbrace{e^{-n^{\infty} \kappa ( t_2- t^*)}}_{\geq 1}  e^{-2n^{\infty} \kappa (\tau + 1)} \geq e^{-2n^{\infty} \kappa (\tau + 1)} \]
 to derive the desired estimate \eqref{D-4-0}.
\end{proof}
We are now ready to provide a proof of Theorem \ref{T3.1} in the following subsection.
\subsection{Proof of Theorem~\ref{T3.1}} \label{sec:4.2}
We split its proof into two steps. First we show the exponential decay of $\D(t)$, and then using this exponential decay of $\D(t)$, we derive the uniform boundedness of the relative spatial positions. \newline

\noindent $\bullet$~Step A (Exponential decay of relative velocities):  Suppose that network topology ${\mathcal G},$ coupling strength $\kappa$, and time-delay $\tau$ satisfy the relations:
\[ \gamma_g > 0, \quad  n^{\infty} = \max_{1\leq i \leq N} |{\mathcal N}_i| > 0, \quad \tau > 0, \quad \kappa > 0. \]
For $\rho > 0$ and initial spatial condition $X(0)$, we define $\delta$ as follows.
\begin{equation*}
\delta := 1-\frac{ (\psi(X(0)+\rho))^{\gamma_g} e^{-n^{\infty} \kappa \gamma_g(3\tau+2)}}{2(1+n^{\infty} \kappa)^{\gamma_g}} > 0.
\end{equation*}
For given initial data set $\{ (\bx^0(t), \bv^0(t)):~-\tau \leq t \leq 0 \}$, from \eqref{C-1-1} there exists a positive constant $\rho>0$ such that
\begin{equation}\label{D-12}
2\sqrt{d}\gamma_g(2\tau+1)(1+ n^{\infty} \kappa)^{\gamma_g} \D(0)\leq
e^{-n^{\infty} \kappa {\gamma_g}(3\tau+2)} (\psi(X(0)+\rho))^{\gamma_g}  \rho.
\end{equation}
Since $\D(t)$ is non-increasing in $t$, we will show that for the exponential decay of relative velocities it is enough to have
\begin{equation} \label{D-13}
\D(n\gamma_g(2\tau+1))\leq\delta^n\D(0), \quad n \in \NN \cup \{0 \}.
\end{equation}

\bigskip

Next, we verify the relation \eqref{D-13} using mathematical induction on $n$. \newline

\noindent $\diamond$~Step A (Initial step):  Note that  the relation \eqref{D-13} is trivially true.  \newline

\noindent $\diamond$~Step B (induction step): Suppose that the relation \eqref{D-13} holds for all $n = 0, 1, \cdots, T \in \NN \cup \{0 \}$, i.e.,
\begin{equation}\label{D-14}
\D(n\gamma_g(2\tau+1))\leq\delta^n\D(0), \quad n = 0, \cdots, T.
\end{equation}
Let $t \in [T\gamma_g(2\tau+1), (T+1)\gamma_g(2\tau+1)]$. Then, it follows from $\eqref{C-1}_1$ that we have
\begin{align}
\begin{aligned}\label{D-15}
&\|\bx_i(t)-\bx_j(t-\tau_{ij}(t))\|\\
&\hspace{1cm} =\|\bx_i(0)-\bx_j(-\tau_{ij}(t))\|
+\int_0^t\frac{d}{ds}\|\bx_i(s)-\bx_j(s-\tau_{ij}(t))\|ds\\
&\hspace{1cm} \leq\|\bx_i(0)-\bx_j(-\tau_{ij}(t))\|+\int_0^t\|\bv_i(s)-\bv_j(s-\tau_{ij}(t))\|ds\\
&\hspace{1cm} \leq X(0)
+\sum_{n=0}^T\int_{n\gamma_g(2\tau+1)}^{(n+1)\gamma_g (2\tau+1)}\|\bv_i(s)-\bv_j(s-\tau_{ij}(t))\|ds.\\
\end{aligned}
\end{align}
On the other hand, note that
\begin{align}
\begin{aligned} \label{D-16}
&\|\bv_i(s)-\bv_j(s-\tau_{ij}(t))\|\\
&\hspace{0.5cm} \leq\sqrt{d}\max_{1\leq k \leq d}|v^k_{i}(s)-v^k_{j}(s-\tau_{ij}(t))| \leq\sqrt{d}\max_{1\leq k\leq d}(\bar{v}^k(s)-\underline{v}_k(s)) =\sqrt{d}\D(s).
\end{aligned}
\end{align}
We combine \eqref{D-15} and \eqref{D-16} to obtain
\begin{align}
\begin{aligned} \label{D-17}
&\|\bx_i(t)-\bx_j(t-\tau_{ij}(t))\|\\
&\hspace{0.5cm} \leq X(0)
+\sqrt{d}\sum_{n=0}^T\int_{n\gamma_g(2\tau+1)}^{(n+1)\gamma_g(2\tau+1)}\D(s)ds  \\
& \hspace{0.5cm} \leq X(0)
+\sqrt{d}\gamma_g(2\tau+1)\sum_{n=0}^T\D(n\gamma_g(2\tau+1))\\
&\hspace{0.5cm} \leq X(0) +\sqrt{d}\gamma_g(2\tau+1)\sum_{n=0}^T\delta^n\D(0) \\
& \hspace{0.5cm} \leq X(0)+\sqrt{d}\gamma_g(2\tau+1)
\frac{\D(0)}{1-\delta}\\
& \hspace{0.5cm} \leq X(0)+\rho.
\end{aligned}
\end{align}
Here, the second inequality follows from the fact that $\D(t)$ is non-increasing with $t$, and the last inequality is from \eqref{D-12}:
\[ \sqrt{d}\gamma_g(2\tau+1) \D(0) \leq  \frac{e^{-n^{\infty} \kappa {\gamma_g}(3\tau+2)}  (\psi(X(0)+\rho))^{\gamma_g}}{2(1+ n^{\infty} \kappa)^{\gamma_g}} \rho = (1-\delta) \rho.   \]
Next, for $j \in {\mathcal N}_i$, we use \eqref{D-17} and monotonicity of $\psi$ to obtain
\[ \chi_{ij} \psi(\|\bx_j(t -\tau_{ij}(t)) - \bx_i(t) \|) = \psi(\|\bx_j(t -\tau_{ij}(t)) - \bx_i(t) \|) \geq \psi(X(0)+\rho). \]

On the other hand, since $\G$ has a spanning tree, we take a root $r$ of $\G$ such that
\[ \gamma_g =\max_{j\in\V}\dist(r,j). \]
For a fixed $k \in \{1,\cdots, d\}$, we consider the following two cases:
\begin{align*}
\begin{aligned}
& \mbox{Case (i)}:~v^k_{r}(T\gamma_g (2\tau+1))\leq\frac{\bar{v}^k (T\gamma_g (2\tau+1))
+\underline{v}^k(T\gamma_g(2\tau+1))}{2}. \\
& \mbox{Case (ii)}:~v^k_{r}(T\gamma_g (2\tau+1))\geq\frac{\bar{v}^k (T\gamma_g (2\tau+1))
+\underline{v}^k(T\gamma_g(2\tau+1))}{2}.
\end{aligned}
\end{align*}
(i) We first consider the case:
\begin{align}
\begin{aligned}  \label{D-18}
v^k_{r}(T\gamma_g (2\tau+1))&\leq\frac{\bar{v}^k (T\gamma_g (2\tau+1))
+\underline{v}^k(T\gamma_g(2\tau+1))}{2} \\
&=\bar{v}^k(T\gamma_g(2\tau+1))-\frac{\D^k(T\gamma_g(2\tau+1))}{2}.
\end{aligned}
\end{align}

We apply Lemma \ref{L4.2} with $\alpha_c = 1$ to find that for all $t\geq T\gamma_g (2\tau+1)$,
\begin{equation*}
v^k_{r}(t)\leq\bar{v}^k(T\gamma_g(2\tau+1))-
 e^{-n^{\infty} \kappa (t-T\gamma_g(2\tau+1))}\frac{\D^k(T\gamma_g(2\tau+1))}{2}.
\end{equation*}
For any $i\neq r$, we can find a path from $r$ to $i$ of length $p\leq \gamma_g$:
\[  r=i_0~\rightarrow~i_1~\rightarrow~\ldots~\rightarrow i_p=i. \]
\noindent For $i_1$, we apply Proposition \ref{P4.1} with
\[ i=r, \quad j=i_1, \quad t_1=t_2=T\gamma_g (2\tau+1), \quad  \alpha_c=1 \quad \mbox{and} \quad \underline{\psi}= \psi(X(0)+\rho) \]
to get
\[
v_{i_1}^k(T\gamma_g(2\tau+1)+\tau+1)\leq\bar{v}^k(T\gamma_g(2\tau+1))-
 \frac{\psi(X(0)+\rho) e^{-2n^{\infty} \kappa(\tau+1)}}{1+n^{\infty} \kappa}\frac{\D^k(T\gamma_g(2\tau+1))}{2}.
\]
We again apply Lemma \ref{L4.2} to the above relation to find
\[
v_{i_1}^k(T\gamma_g(2\tau+1)+2\tau+1)\leq\bar{v}^k(T\gamma_g(2\tau+1))-
 \frac{\psi(X(0)+\rho) e^{-n^{\infty} \kappa(3\tau+2)}}{1+n^{\infty} \kappa}\frac{\D^k(T\gamma_g (2\tau+1))}{2}.
\]
We continue to apply Proposition \ref{P4.1} with
\begin{align*}
\begin{aligned}
& i=i_1, \quad j=i_2, \quad t_1=T\gamma_g(2\tau+1), \quad t_2=T\gamma_g(2\tau+1)+2\tau+1, \\
&  \alpha_c= \frac{\psi(X(0)+\rho) e^{-n^{\infty}\kappa(3\tau+2)}}{1+ n^{\infty} \kappa} \quad \mbox{and} \quad \underline{\psi}= \psi(X(0)+\rho)
\end{aligned}
\end{align*}
to get
\begin{align*}
\begin{aligned}
& v_{i_2}^k(T\gamma_g(2\tau+1)+2\tau+1+\tau+1) \\
& \hspace{1cm} \leq\bar{v}^k(T\gamma_g(2\tau+1))-
 \frac{(\psi(X(0)+\rho))^2 e^{-n^{\infty} \kappa(5\tau+4)}}{(1+n^{\infty} \kappa)^2}\frac{\D^k(T\gamma_g(2\tau+1))}{2}.
\end{aligned}
\end{align*}
We proceed the above argument until $i_p=i$ to derive
\begin{align}
\begin{aligned} \label{D-20}
&v_{i}^k(T\gamma_g (2\tau+1)+(p-1)(2\tau+1)+\tau+1)\\
&\hspace{1cm} \leq\bar{v}^k(T\gamma_g(2\tau+1))- \frac{ (\psi(X(0)+\rho))^p e^{-n^{\infty} \kappa((3(p-1)+2)\tau+2p)}}{(1+n^{\infty}\kappa)^p}\frac{\D^k(T\gamma_g(2\tau+1))}{2}.
\end{aligned}
\end{align}
Then, \eqref{D-20} and Lemma \ref{L4.2} yield
\begin{align*}
\begin{aligned} \label{D-21}
&v_{i}^k(T\gamma_g(2\tau+1)+(\gamma_g-1)(2\tau+1)+\tau+1)\\
&\hspace{1cm} \leq\bar{v}^k(T\gamma_g(2\tau+1))- \frac{(\psi(X(0)+\rho))^p  e^{-n^{\infty} \kappa(\gamma_g(2\tau+1)+(p-1)\tau+p)}}{(1+n^{\infty}\kappa)^p}\frac{\D^k(T\gamma_g(2\tau+1))}{2}\\
&\hspace{1cm} \leq\bar{v}^k(T\gamma_g(2\tau+1))- \frac{(\psi(X(0)+\rho))^{\gamma_g} e^{-n^{\infty} \kappa((3\gamma_g-1)\tau+2\gamma_g)}}{(1+n^{\infty}\kappa)^{\gamma_g}}\frac{\D^k(T\gamma_g(2\tau+1))}{2}.
\end{aligned}
\end{align*}
We recall \eqref{D-18} to see that the above inequality holds for all $1\leq i\leq N$. We once again apply Lemma \ref{L4.2} to conclude that for all $t\geq T\gamma_g(2\tau+1)+(\gamma_g-1)(2\tau+1)+\tau+1$ and $i\leq N$:
\[
v^k_{i}(t)\leq \bar{v}^k(T\gamma_g(2\tau+1))- \frac{(\psi(X(0)+\rho))^{\gamma_g} e^{-n^{\infty}\kappa(t-T\gamma_g(2\tau+1)+\gamma_g(\tau+1))}}{(1+n^{\infty} \kappa)^{\gamma_g}}\frac{\D^k(T\gamma_g(2\tau+1))}{2}.
\]
Therefore, we have
\begin{align}
\begin{aligned} \label{D-22}
&\bar{v}^k((T+1)\gamma_g(2\tau+1))\\
&\hspace{1cm} =\max_{\substack{1\leq i\leq k\\(T+1)\gamma_g(2\tau+1)-\tau\leq t\leq(T+1)\gamma_g(2\tau+1)}}v^k_{i}(t)\\
&\hspace{1cm} \leq\bar{v}^k(T\gamma_g(2\tau+1)) -\frac{(\psi(X(0)+\rho))^{\gamma_g} e^{-n^{\infty} \kappa \gamma_g(3\tau+2)}}{(1+n^{\infty}\kappa)^{\gamma_g}} \frac{\D^k(T\gamma_g(2\tau+1))}{2}.
\end{aligned}
\end{align}
Again, we use the induction hypothesis \eqref{D-14} and \eqref{D-22} to get
\begin{align*}
\begin{aligned}
&\D^k((T+1)\gamma_g(2\tau+1))\\
&\hspace{0.2cm} =\bar{v}^k((T+1)\gamma_g(2\tau+1))-\underline{v}^k((T+1)\gamma_g(2\tau+1))\\
&\hspace{0.2cm} \leq\bar{v}^k(T\gamma_g(2\tau+1))
-\frac{(\psi(X(0)+\rho))^{\gamma_g} e^{-n^{\infty} \kappa \gamma_g(3\tau+2)}}{(1+ n^{\infty} \kappa)^{\gamma_g}}
\frac{\D^k(T\gamma_g(2\tau+1))}{2}-\underline{v}^k(T\gamma_g(2\tau+1))\\
&\hspace{0.2cm} =\delta\D^k(T\gamma_g(2\tau+1)) \leq\delta^{T+1}\D(0),
\end{aligned}
\end{align*}
which yields the claim \eqref{D-13}.  \newline
(ii) For the other case, i.e., 
\begin{align}
\begin{aligned}\label{D-19}
v^k_{r}(T\gamma_g (2\tau+1))
&\geq\bar{v}^k(T\gamma_g(2\tau+1))-\frac{\D^k(T\gamma_g(2\tau+1))}{2}\\
&=\underline{v}^k(T\gamma_g(2\tau+1))+\frac{\D^k(T\gamma_g(2\tau+1))}{2},
\end{aligned}
\end{align}
we, in turn, consider the vector $-v^k(T\gamma_g (2\tau+1))=\left(-v^k_1(T\gamma_g (2\tau+1)),\ldots, -v^k_N(T\gamma_g (2\tau+1))\right)$. Note that the diameter of the vector $-v^k(T\gamma_g (2\tau+1))$ is the same with that of $v^k(T\gamma_g (2\tau+1))$. It follows from \eqref{D-19} that
\begin{align*}
\begin{aligned}
-v^k_{r}(T\gamma_g (2\tau+1))
&\leq-\underline{v}^k(T\gamma_g(2\tau+1))-\frac{\D^k(T\gamma_g(2\tau+1))}{2}\\
&=\overline{(-v)}^k(T\gamma_g(2\tau+1))-\frac{\D^k(T\gamma_g(2\tau+1))}{2}.\\
\end{aligned}
\end{align*}
Then we proceed the following arguments by observing that Lemma \ref{L4.2} and Proposition \ref{P4.1} still hold with $v_i^k(t)$ replaced by $-v_i^k(t)$.\newline

Note that the relation \eqref{D-13} holds only for integer $n$. However, using the monotonic decreasing property of $\D(t)$, we can easily see
\[ \D^k(t \gamma_g(2\tau+1)) \leq \delta^{[t]} \D(0), \quad t \geq 0. \]
We set
\[ s := t\gamma_g(2\tau+1). \]
Then the relation \eqref{D-13} yields
\begin{equation} \label{D-23}
 |v^k_i(s) - v^k_j(s)|  \leq\D(s)  \leq \delta^{[\frac{s}{\gamma_g (2\tau +1)}]} \D(0).
\end{equation}
Again, we use the above relation and the fact $\delta < 1$ to find
\begin{equation} \label{D-24}
\lim_{s \to \infty} ||\bv_i(s) -\bv_j(s)|| \leq \sqrt{d}  \D(0) \lim_{s \to \infty} \delta^{[\frac{s}{\gamma_g (2\tau +1)}]} = 0.
\end{equation}

\vspace{0.2cm}

\noindent $\bullet$~Step B (Uniform boundedness of relative positions): We use \eqref{D-23} to obain the uniform boundedness of spatial differences:
\begin{align*}
\begin{aligned}
& |x^k_i(t)- x^k_j(t)|\\
&\hspace{1cm} \leq |x^k_i(0)-x^k_j(0)| +\int_0^t   |v^k_i(s)- v^k_j(s)| ds\\
&\hspace{1cm} \leq  |x^k_i(0)-x^k_j(0)|  + \D(0) \int_0^t \delta^{[\frac{s}{\gamma_g (2\tau +1)}]} ds \\
&\hspace{1cm} \leq  |x^k_i(0)-x^k_j(0)|  + \D(0) \delta^{-1} \int_0^t \delta^{\frac{s}{\gamma_g (2\tau +1)}} ds \\
& \hspace{1cm} \leq |x^k_i(0)-x^k_j(0)|  + \D(0) \frac{\delta^{-1} \gamma_g (2\tau +1)}{\ln \delta^{-1}} \Big(1 - \delta^{\frac{t}{\gamma_g (2\tau +1)}} \Big).
\end{aligned}
\end{align*}
This again yields
\begin{equation} \label{D-25}
 ||\bx_i(t) - \bx_j(t) \| \leq \sum_{k=1}^{d}|x^k_i(0)-x^k_j(0)|  + d \D(0) \frac{\delta^{-1} \gamma_g (2\tau +1)}{\ln \delta^{-1}}.
 \end{equation}
Finally, we combine \eqref{D-24} and \eqref{D-25} to complete the proof of Theorem \ref{T3.1}.

\subsection{Proof of Corollary \ref{C3.1}} \label{sec:4.3} In this subsection, we provide sufficient conditions for initial data leading to the mono-cluster flocking depending on the decay rate of the C-S communication weight.  Note that the sufficient condition \eqref{C-1-1} is equivalent to the following relation:
\[  \D(0) <   \frac{C_\infty \kappa^{\gamma_g} \rho}{[ 1 + (X(0) + \rho)^2 ]^{\beta \gamma_g}} =: R(\rho). \]
\begin{lemma} \label{L4.3}
Let  $\{(\bx_i(t),\bv_i(t))\}_{i=1}^N$ be a solution to system~\eqref{C-1}, and $\D$ and $X(0)$ be quantities defined in \eqref{A-3}. Then, we have the following estimates:
\begin{enumerate}
\item
Suppose that $\beta  \in [0, \frac{1}{2\gamma_g})$. Then for any initial data with $\D(0) < \infty$, there exists a positive constant $\rho_*$ such that if $\rho > \rho_*$, we have
\[  \D(0) <   \frac{C_\infty \kappa^{\gamma_g} \rho}{[ 1 + (X(0) + \rho)^2 ]^{\beta \gamma_g}}.     \]
\item
Suppose that $\beta = \frac{1}{2\gamma_g}$. Then for any initial data  with $\D(0) < C_\infty \kappa^{\gamma_g}$, there exists a positive constant $\rho_*$ such that if $\rho > \rho_*$, we have
\[  \D(0) <   \frac{C_\infty \kappa^{\gamma_g} \rho}{[ 1 + (X(0) + \rho)^2 ]^{\beta \gamma_g}}.     \]
\item
Suppose that $\beta \in (\frac{1}{2\gamma_g}, \infty)$. Then we have 
\[ \sup_{0<\rho<\infty}R( \rho)=R(\rho^+) \]
where $\rho_+$ is a positive constant such that
\[ \rho_+ := \frac{ X(0) (1-\beta \gamma_g) + \sqrt{  \beta^2 \gamma_g^2 X(0)^2 + 2 \beta \gamma_g -1}}{2\beta \gamma_g -1}.     \]
\end{enumerate}
\end{lemma}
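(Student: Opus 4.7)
The strategy is to analyze the one-variable function $R(\rho)$ on $(0,\infty)$ directly, splitting into the three regimes determined by the sign of $1-2\beta\gamma_g$. In all cases the sufficient condition $\D(0)<R(\rho)$ amounts to elementary calculus on $R$.

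First, I would carry out the asymptotic analysis at infinity. Writing
\[
R(\rho)=C_\infty\kappa^{\gamma_g}\,\frac{\rho}{\bigl[1+(X(0)+\rho)^2\bigr]^{\beta\gamma_g}},
\]
one reads off $R(\rho)\sim C_\infty\kappa^{\gamma_g}\,\rho^{1-2\beta\gamma_g}$ as $\rho\to\infty$. In case (1), the exponent $1-2\beta\gamma_g$ is strictly positive, so $R(\rho)\to\infty$; hence any finite $\D(0)$ admits a threshold $\rho_*$ with $R(\rho)>\D(0)$ for all $\rho>\rho_*$. In case (2), the exponent vanishes and $R(\rho)\to C_\infty\kappa^{\gamma_g}$; the strict inequality $\D(0)<C_\infty\kappa^{\gamma_g}$, together with continuity of $R$, supplies the desired $\rho_*$ (one should briefly check that $R$ approaches its limit continuously from the correct side, which is straightforward from the closed-form expression).

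For case (3), both limits $\rho\to 0^+$ and $\rho\to\infty$ send $R(\rho)$ to $0$ (the latter because now $1-2\beta\gamma_g<0$), so $R$ attains its maximum at an interior critical point of $(0,\infty)$. Differentiating $\log R(\rho)$ and setting the result to zero yields
\[
\frac{1}{\rho}=\frac{2\beta\gamma_g(X(0)+\rho)}{1+(X(0)+\rho)^2},
\]
which clears to the quadratic
\[
(2\beta\gamma_g-1)\rho^2+2X(0)(\beta\gamma_g-1)\rho-\bigl(1+X(0)^2\bigr)=0.
\]
Since $2\beta\gamma_g-1>0$ and the constant term is strictly negative, the product of the two real roots is negative, so the equation has exactly one positive root. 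Applying the quadratic formula and simplifying the discriminant to $4\bigl[\beta^2\gamma_g^2 X(0)^2+2\beta\gamma_g-1\bigr]$ produces precisely the closed form for $\rho_+$ stated in the lemma; being the unique interior critical point of $R$ between two vanishing endpoint limits, $\rho_+$ realizes $\sup_{\rho>0}R(\rho)$.

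The only genuine obstacle is the algebraic simplification of the discriminant: one expands
\[
X(0)^2(\beta\gamma_g-1)^2+(2\beta\gamma_g-1)\bigl(1+X(0)^2\bigr)
\]
and observes that the $-2\beta\gamma_g X(0)^2$ contribution from the square cancels against the $+2\beta\gamma_g X(0)^2$ from the product, while the leftover $+X(0)^2$ and $-X(0)^2$ also cancel, leaving exactly $\beta^2\gamma_g^2 X(0)^2+2\beta\gamma_g-1$. Everything else reduces to one-variable limit analysis and sign bookkeeping on the quadratic.
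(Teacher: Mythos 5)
Your proof is correct and follows essentially the same route as the paper: both analyze the one-variable function $R(\rho)$ by elementary calculus, using the asymptotics $R(\rho)\sim C_\infty\kappa^{\gamma_g}\rho^{1-2\beta\gamma_g}$ in cases (1)--(2) and reducing case (3) to the unique positive root of the same quadratic, whose discriminant simplifies to $4(\beta^2\gamma_g^2X(0)^2+2\beta\gamma_g-1)$. The only cosmetic differences are that you differentiate $\log R$ and argue the maximum via the vanishing endpoint limits, whereas the paper computes $R'$ directly and invokes its sign; these are interchangeable.
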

\begin{proof} First, we consider the function:
\[  R(\rho) := \frac{C_\infty \kappa^{\gamma_g} \rho}{[1 + (X(0) + \rho)^2]^{\beta \gamma_g}}, \quad \rho > 0.   \]
By direct calculation, we have
\begin{equation} \label{D-11-1}
 R^{\prime}(\rho) = C_\infty \kappa^{\gamma_g} \Big( \frac{(1 - 2\beta \gamma_g) \rho^2 + 2X(0) (1-\beta \gamma_g) \rho + 1 + X(0)^2}{[ 1 + (X(0) + \rho)^2]^{\beta \gamma_g + 1}} \Big ).
 \end{equation}

\vspace{0.5cm}

\noindent(i)~Suppose that $0 \leq \beta < \frac{1}{2\gamma_g}$. It is easy to see that
\begin{equation} \label{D-11-2}
 R(\rho) \approx C_\infty \kappa^{\gamma_g} \rho^{1-2\beta \gamma_g} \quad \mbox{and} \quad  R^{\prime}(\rho) \approx  R(\rho) \frac{(1-2\beta \gamma_g)}{\rho}, \quad \rho \gg 1.
 \end{equation}
The relations \eqref{D-11-1} and \eqref{D-11-2} yield that $R(\rho)$  monotonically increases to $\infty$, as $\rho \to \infty$. Thus, we can choose $\rho_* > 0$ such that
\[ \D(0) < R(\rho_*) \leq R(\rho), \qquad \forall~\rho > \rho_*. \]

\vspace{0.5cm}

\noindent (ii)~Suppose that $\beta = \frac{1}{2\gamma_g}$. In this case, we have
\[   R(\rho) =  \frac{C_\infty \kappa^{\gamma_g} \rho}{[1 + (X(0) + \rho)^2]^{\frac{1}{2}}}, \qquad   R^{\prime}(\rho) = R(\rho) \frac{(X(0) \rho + 1 + X(0)^2)}{\rho [ 1 + (X(0) + \rho)^2]} > 0. \]
Thus, $R(\rho)$ monotonically increases to $C_\infty$, as $\rho \to \infty$. Thus, there exists $\rho_* > 0$ such that
\[ \D(0) < R(\rho_*) \leq R(\rho) \leq C_\infty \kappa^{\gamma_g}, \qquad \forall~\rho > \rho_*. \]

\vspace{0.5cm}

\noindent (iii)~Suppose that $\beta > \frac{1}{2\gamma_g}$. We set a quadratic polynominal $h$:
\[  h(\rho):= (1 - 2\beta \gamma_g) \rho^2 + 2X(0) (1-\beta \gamma_g) \rho + 1 + X(0)^2 = 0. \]
Then, the discriminant $\Delta$ of $h = 0$ becomes
\begin{align*}
\begin{aligned}
\Delta &=  4 X(0)^2 (1-\beta \gamma_g)^2 - 4 (1 - 2\beta \gamma_g) ( 1 + X(0)^2 )  \\
&= 4 \Big(  \beta^2 \gamma_g^2 X(0)^2 + 2 \beta \gamma_g -1 \Big) > 0.
\end{aligned}
\end{align*}
and two roots $\rho_\pm$ of $h(\rho) = 0$ become
\[ \rho_{\pm} := \frac{ -X(0) (1-\beta \gamma_g) \pm \sqrt{  \beta^2 \gamma_g^2 X(0)^2 + 2 \beta \gamma_g -1}}{(1 - 2\beta \gamma_g)}.     \]
Note that
\[
R^{\prime}(\rho) = 0,~~ \rho > 0 \quad \Longleftrightarrow \quad h(\rho) = 0,~~\rho > 0 \quad \Longleftrightarrow \quad \rho = \rho_+.
\]
Then, we have
\begin{equation*}
 \sup_{0\leq \rho < \infty} R(\rho) = R(\rho_+).
\end{equation*}
\end{proof}

\noindent {\it Proof of Corollary \ref{C3.1}}: Depending on the relative sizes of $\beta$, we consider three cases:
\[ 2\gamma_g \beta  < 1, \qquad 2\gamma_g \beta = 1, \qquad 2\gamma_g \beta  > 1. \]
The last case is trivial, so we only consider the first two cases. (Note that Lemma \ref{L4.3} (3) helps us derive the best upper bound for $\D(\rho)$ from Theorem \ref{T3.1}.)

\noindent $\diamond$ Case 1~$(\beta<\frac{1}{2\gamma_g})$: In this case, it follows from (1) in Lemma \ref{L4.3} that there is no restriction on the initial data to guarantee the relation \eqref{D-23}.  \newline

\noindent $\diamond$ Case 2~$(\beta = \frac{1}{2\gamma_g})$: In this case, if initial diameter $\D(0)$ satisfies
\[  \D(0) <   \frac{e^{-n^{\infty} \kappa {\gamma_g}(3\tau+2)} \kappa^{\gamma_g} }{  2\sqrt{d}\gamma_g(2\tau+1)(1+ n^{\infty} \kappa)^{\gamma_g}   } = C_\infty \kappa^{\gamma_g}, \]
then we have the relation \eqref{D-23}.  \newline

This completes the proof of Corollary \ref{C3.1}.

\section{The discrete Cucker-Smale model}\label{sec:5}
\setcounter{equation}{0}
In this section, we provide a proof of our second main result Theorem \ref{T3.2} for the discrete C-S model on a digraph. The presentation will be almost parallel to Section \ref{sec:4}. \newline

Note that the $k$-th component $(x_i^k, v_i^k)$ satisfies the difference equations:
\begin{align}
\begin{aligned} \label{F-1}
x^k_{i}[t+1] &=x^k_{i}[t]+hv^k_{i}[t], \quad t \in \NN \cup \{0 \},\\
v^k_{i}[t+1] &=v^k_{i}[t]+ h \sum_{j=1}^{N} \chi_{ij} \psi(\|\bx_j[t -\tau_{ij}[t]] - \bx_i[t] \|)
(v^k_{j}[t-\tau_{ij}[t]]-v^k_{i}[t]).
\end{aligned}
\end{align}
Here $\tau_{ij}[t]\in\NN \cup \{0 \}$ and $0\leq\tau_{ij}[t]\leq \tau$ for all $t\in\NN \cup \{0 \}$.

\subsection{Basic estimates} \label{sec:5.1} Next, we provide several estimates to be crucially used in the proof of Theorem \ref{T3.2}.

\begin{lemma}\label{L5.1}
Suppose that time step $h$ and coupling strength $\kappa$ satisfy
\[ \kappa > 0, \quad  0<h \kappa<\frac{1}{n^{\infty}}, \]
and let $\{(\bx_i[t],\bv_i[t])\}_{i=1}^N$ be a solution to system~\eqref{F-1}. Then, $\bar{v}^k[t]$ and $\underline{v}^k[t]$ defined in \eqref{A-3} are nonincreasing and nondecreasing, respectively.
\end{lemma}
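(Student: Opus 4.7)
The plan is to carry out the discrete analogue of Lemma~\ref{L4.1} by recasting the forward Euler step \eqref{F-1} as a convex combination and then using the same envelope-comparison idea. The step-size condition $h\kappa < 1/n^\infty$ is exactly what converts the explicit update into a (genuine) convex combination, playing the role of the sign argument at the extremum used in the continuous case.

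First I would rewrite $\eqref{F-1}_2$ as
\[
v^k_i[t+1] \;=\; a_i[t]\, v^k_i[t] \;+\; h\sum_{j=1}^N \chi_{ij}\,\psi_{ij}[t]\, v^k_j[t-\tau_{ij}[t]],
\]
where $\psi_{ij}[t] := \psi(\|\bx_j[t-\tau_{ij}[t]] - \bx_i[t]\|)$ and $a_i[t] := 1 - h\sum_{j=1}^N \chi_{ij}\,\psi_{ij}[t]$. Since $\chi_{ij}\in\{0,1\}$, $|\N_i|\leq n^\infty$, and $0<\psi\leq\kappa$, the standing assumption $h\kappa < 1/n^\infty$ yields $a_i[t] \geq 1 - h n^\infty \kappa > 0$, while the full collection of coefficients on the right-hand side sums to $1$ by construction. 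Hence $v^k_i[t+1]$ is a convex combination of $v^k_i[t]$ together with the delayed values $\{v^k_j[t-\tau_{ij}[t]]\}_j$.

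Next I would observe that every time index appearing on the right-hand side lies in $\{t-\tau,\ldots,t\}$, because $0\leq \tau_{ij}[t]\leq \tau$, and so each entry is bounded above by $\bar{v}^k[t]$. Consequently $v^k_i[t+1]\leq \bar{v}^k[t]$ for every $i$. Combining this with the trivial inclusion $\{t+1-\tau,\ldots,t\}\subset\{t-\tau,\ldots,t\}$ (which already gives $v^k_i[s]\leq \bar{v}^k[t]$ for $s$ in the former window), and then taking the maximum over $i$ and over $s\in\{t+1-\tau,\ldots,t+1\}$, I obtain $\bar{v}^k[t+1]\leq \bar{v}^k[t]$, which is the desired monotone non-increasing property. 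The statement for $\underline{v}^k$ follows by applying the same argument to $-v^k_i$, whose update inherits an identical convex-combination structure (with the same coefficients $a_i[t]$).

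I do not anticipate a real obstacle in this proof. The only point requiring care is the CFL-type condition $h\kappa < 1/n^\infty$, which is precisely the threshold for the leading coefficient $a_i[t]$ to remain nonnegative; without it the convex-combination argument collapses and the envelope need not be monotone. This small technical requirement is exactly the discrete counterpart of the derivative-sign argument used in Lemma~\ref{L4.1}, and I expect it to propagate naturally throughout the remainder of Section~\ref{sec:5} whenever an analogue of a continuous-time monotonicity or Grönwall estimate is needed.
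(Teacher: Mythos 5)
Your proof is correct and is essentially the paper's own argument: the paper likewise rewrites the Euler step as a convex combination whose leading coefficient is $1-h\sum_j\chi_{i^*j}\psi(\cdot)\geq 1-hn^{\infty}\kappa>0$ and bounds all delayed entries by $\bar{v}^k[t]$, only organizing the conclusion as a case split on where the maximum defining $\bar{v}^k[t+1]$ is attained rather than via your uniform bound plus window inclusion. The two presentations are interchangeable, and your treatment of $\underline{v}^k$ via $-v^k_i$ matches the paper's ``similarly'' remark.
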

\begin{proof}
For $k \in \{1, \cdots, N \}$, we first show that $\bar{v}^k[t]$ is nonincreasing. The estimate for $\underline{v}^k[t]$ can be treated similarly. First recall definition of $\bar{v}^k[t]$:
\[ \bar{v}^k[t]=\max_{\substack{1\leq i\leq k\\t-\tau\leq s\leq t}}v^k_{i}[s]. \]
We claim:
\[ \label{F-1-1}
 \bar{v}^k[t+1]\leq\bar{v}^k[t] \quad \mbox{for all}~~t \in \NN.
\]
For $\bar{v}^k[t+1]$, we can find $i^*$ and $t^*\in [t+1-\tau, t+1]$ such that $\bar{v}^k[t+1]=v^k_{i^*}[t^*]$. We now need to consider two cases depending on the value of $t^*$:
\[ t^*\in[t+1-\tau, t], \quad t^* = t + 1. \]

\vspace{0.2cm}

\noindent $\bullet$ Case A ($t^*\in[t+1-\tau, t]$): In this case, it follows from definition of $\bar{v}^k[t]$ that
\[ v^k_{i^*}[t^*]\leq\bar{v}^k[t], \quad \mbox{i.e.,} \quad \bar{v}^k[t+1]=v^k_{i^*}[t^*]\leq\bar{v}^k[t]. \]

\vspace{0.2cm}

\noindent $\bullet$ Case B ($t^*=t+1$): In this case, we rewrite $\eqref{F-1}_2$ at $i^*$ as follows.
\begin{align*}
\begin{aligned}
v^k_{i^*}[t+1] &=\left(1- h\sum_{j=1}^{N} \chi_{i^*j} \psi(\|\bx_j[t -\tau_{i^*j}[t]] - \bx_i^*[t] \|)
\right)v^k_{i^*}[t] \\
&+ h \sum_{j=1}^{N} \chi_{i^*j} \psi(\|\bx_j[t -\tau_{i^*j}[t]] - \bx_i^*[t] \|) v^k_{j}[t-\tau_{i^*j}[t]]\\
&\leq\left(1-h \sum_{j=1}^{N} \chi_{i^*j} \psi(\|\bx_j[t -\tau_{i^*j}[t]] - \bx_i^*[t] \|)   \right)\bar{v}^k[t] \\
&+ h\sum_{j =1}^{N} \chi_{i^*j} \psi(\|\bx_j[t -\tau_{i^*j}[t]] - \bx_i^*[t] \|) \bar{v}^k[t]\\
&=\bar{v}^k[t],
\end{aligned}
\end{align*}
where we use the fact that
\[ 1- h \sum_{j=1}^{N} \chi_{i^*j} \psi(\|\bx_j[t -\tau_{i^*j}[t]] - \bx_i^*[t] \|) \geq1-hn^{\infty} \kappa >0. \]
Therefore, we can conclude that $\bar{v}^k[t]$ is non-increasing. The monotonicity of $\underline{v}^k[t]$ can be shown similarly.
\end{proof}
\begin{lemma}\label{L5.2}
Suppose that time step $h$ and coupling strength $\kappa$ satisfy
\[ 0<h \kappa<\frac{1}{n^{\infty}}, \]
and let $\{(\bx_i[t],\bv_i[t])\}_{i=1}^N$ be a solution to system~\eqref{F-1}. If for $t_2\geq t_1\geq0$, there exists a positive constant $\alpha_d$ such that
\[
v^k_{i}[t_2]\leq\overline{v}^k[t_1]- \alpha_d \frac{\D^k[t_1]}{2},
\]
then, for all $t\geq t_2$,
\[
v^k_{i}[t]\leq\overline{v}^k[t_1]-(1-hn^{\infty} \kappa)^{t-t_2} \alpha_d \frac{\D^k[t_1]}{2}.
\]
\end{lemma}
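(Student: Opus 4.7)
My plan is to prove this lemma as the discrete counterpart of Lemma~\ref{L4.2}, replacing the continuous Gronwall argument with induction on the discrete time variable $t \geq t_2$.

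The base case $t = t_2$ is the hypothesis, since $(1 - hn^\infty \kappa)^0 = 1$. For the inductive step, suppose the desired bound holds at time $t \geq t_2$. I would rewrite the update rule $\eqref{F-1}_2$ for agent $i$ in convex-combination form:
\[
v^k_{i}[t+1] = \left(1 - h\sum_{j=1}^N \chi_{ij}\psi_{ij}[t]\right) v^k_{i}[t] + h\sum_{j=1}^N \chi_{ij}\psi_{ij}[t]\, v^k_{j}[t-\tau_{ij}[t]],
\]
where I abbreviate $\psi_{ij}[t] := \psi(\|\bx_j[t-\tau_{ij}[t]] - \bx_i[t]\|)$. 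Under the CFL-type condition $0 < h\kappa < 1/n^\infty$, the coefficient $1 - h\sum_j \chi_{ij}\psi_{ij}[t]$ is bounded below by $1 - hn^\infty\kappa > 0$, so this is a genuine convex combination.

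The two key ingredients for the estimate are then: (a) the inductive hypothesis, giving
$v^k_i[t] \leq \bar{v}^k[t_1] - (1-hn^\infty\kappa)^{t-t_2} \alpha_d \D^k[t_1]/2$; and (b) the monotonicity of $\bar{v}^k$ from Lemma~\ref{L5.1}, which yields $v^k_j[t-\tau_{ij}[t]] \leq \bar{v}^k[t_1]$. For (b), note that $t \geq t_2 \geq t_1$ together with $\tau_{ij}[t]\leq\tau$ give $t - \tau_{ij}[t] \geq t_1 - \tau$; if $t - \tau_{ij}[t] \leq t_1$, then $v^k_j[t-\tau_{ij}[t]] \leq \bar{v}^k[t_1]$ by the very definition of $\bar{v}^k[t_1]$ as a maximum over $[t_1-\tau,t_1]$, while if $t - \tau_{ij}[t] > t_1$, then $v^k_j[t-\tau_{ij}[t]] \leq \bar{v}^k[t-\tau_{ij}[t]] \leq \bar{v}^k[t_1]$ by the non-increasing property. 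Substituting these two bounds into the convex combination produces
\[
v^k_{i}[t+1] \leq \bar{v}^k[t_1] - \left(1 - h\sum_j \chi_{ij}\psi_{ij}[t]\right)(1-hn^\infty\kappa)^{t-t_2}\alpha_d \frac{\D^k[t_1]}{2},
\]
and since $1 - h\sum_j \chi_{ij}\psi_{ij}[t] \geq 1 - hn^\infty\kappa$, the right-hand side is dominated by $\bar{v}^k[t_1] - (1-hn^\infty\kappa)^{t+1-t_2}\alpha_d \D^k[t_1]/2$, closing the induction.

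I do not expect any serious obstacle here; the proof is essentially a transcription of Lemma~\ref{L4.2} with the continuous exponential decay factor $e^{-n^\infty\kappa(t-t_2)}$ replaced by the geometric factor $(1-hn^\infty\kappa)^{t-t_2}$. The only subtle point worth stating carefully is the justification of $v^k_j[t-\tau_{ij}[t]]\leq\bar{v}^k[t_1]$, which relies on both the definition of $\bar{v}^k$ as a max over a window of length $\tau$ and its monotonicity from Lemma~\ref{L5.1}; the CFL condition is used precisely to ensure the convex-combination structure so that such upper bounds on the individual terms propagate to an upper bound on $v^k_i[t+1]$.
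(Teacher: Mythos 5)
Your proof is correct and follows essentially the same route as the paper's: a one-step convex-combination estimate under the condition $1-h\sum_j\chi_{ij}\psi_{ij}\geq 1-hn^{\infty}\kappa>0$, combined with the bound $v^k_j[t-\tau_{ij}[t]]\leq\bar{v}^k[t_1]$, iterated by induction on $t$. The paper simply bounds the coupling sum by $n^{\infty}\kappa$ before inserting the hypothesis on $v^k_i[t]$ rather than after, and your explicit justification of the delayed-term bound via the window definition of $\bar{v}^k$ and Lemma~\ref{L5.1} is a detail the paper leaves implicit.
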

\begin{proof}
We use $\eqref{F-1}_2$ to see
\begin{align*}
\begin{aligned}
&v^k_{i}[t_2+1]\\
&\hspace{0.5cm} =v^k_{i}[t_2]+ h \sum_{j=1}^{N} \chi_{ij}   \psi(\|\bx_j[t -\tau_{ij}[t]] - \bx_i[t] \|)
(v^k_{j}[t_2-\tau_{ij}[t_2]]-v^k_{i}[t_2])\\
&\hspace{0.5cm} \leq v^k_{i}[t_2]+hn^{\infty} \kappa(\bar{v}^k[t_1]-v^k_{i}[t_2]) =(1-hn^{\infty} \kappa)v^k_{i}[t_2]
+hn^{\infty} \kappa \bar{v}^k[t_1]\\
&\hspace{0.5cm} \leq(1-hn^{\infty} \kappa)\left(\bar{v}^k[t_1]-\alpha_d \frac{\D^k[t_1]}{2}\right)
+h n^{\infty} \kappa \bar{v}^k[t_1]\\
&\hspace{0.5cm}=\bar{v}^k[t_1]-(1-hn^{\infty}\kappa ) \alpha_d \frac{\D^k[t_1]}{2}.
\end{aligned}
\end{align*}
Inductively, we obtain that for all $t\geq t_2$,
\[
v^k_{i}[t]\leq\bar{v}^k[t_1]-(1-hn^{\infty} \kappa)^{t-t_2} \alpha_d \frac{\D^k[t_1]}{2}.
\]
\end{proof}
\begin{proposition}\label{P5.1}
Suppose that the agent $i$ satisfies the assumptions of Lemma \ref{L5.2} and  $i$ is a neighbor of $j$, i.e., $i\in\N_j$. Then,  we have
\[ v^k_{j}[t_2+\tau+1]
\leq\bar{v}^k[t_1]-\frac{h\underline{\psi} \alpha_d (1-hn^{\infty} \kappa)^{2\tau+1} }{1+n^{\infty} \kappa}
\frac{\D^k[t_1]}{2},
\]
where the positive constant $\underline{\psi}$ is such that $ \chi_{ij} \psi(\|\bx_j[t -\tau_{ij}[(t]] - \bx_i[t] \|) \geq\underline{\psi}$ for all $t_2\leq t\leq t_2+\tau+1$.
\end{proposition}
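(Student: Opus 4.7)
The plan is to deduce the bound in one shot from the recursion $\eqref{F-1}_2$ evaluated at the discrete time $t=t_2+\tau$, after first controlling $v_i^k$ throughout the delay window via Lemma~\ref{L5.2}. Unlike the continuous argument in Proposition~\ref{P4.1}, no two-case split or contradiction argument is strictly needed in the discrete setting, since the single time step from $t_2+\tau$ to $t_2+\tau+1$ already creates the desired gap through a positive neighbor contribution; the case split of Proposition~\ref{P4.1} could of course still be kept for stylistic symmetry with Section~\ref{sec:4}.

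Concretely, I would proceed in three short steps. \emph{First}, applying Lemma~\ref{L5.2} with the given $\alpha_d$ yields, for every integer $s\in\{t_2,\ldots,t_2+\tau\}$, $v_i^k[s]\le \bar v^k[t_1]-(1-hn^\infty\kappa)^{s-t_2}\alpha_d\,\D^k[t_1]/2$; introducing the deviation $u_l[s]:=\bar v^k[t_1]-v_l^k[s]$ (which is $\ge 0$ by Lemma~\ref{L5.1}, as every relevant time $s\ge t_1$), this translates to $u_i[s]\ge (1-hn^\infty\kappa)^{\tau}\alpha_d\,\D^k[t_1]/2$, using $s-t_2\le\tau$ and $0<1-hn^\infty\kappa<1$. \emph{Second}, rewriting $\eqref{F-1}_2$ at $t=t_2+\tau$ in terms of $u$ gives
\[
u_j[t_2+\tau+1] \;=\; \Big(1-h\!\!\sum_{l\in\N_j}\chi_{jl}\psi_{jl}\Big)\,u_j[t_2+\tau] \;+\; h\!\!\sum_{l\in\N_j}\chi_{jl}\psi_{jl}\,u_l\bigl[t_2+\tau-\tau_{jl}[t_2+\tau]\bigr],
\]
where $\psi_{jl}:=\psi(\|\bx_l[t_2+\tau-\tau_{jl}[t_2+\tau]]-\bx_j[t_2+\tau]\|)$. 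The standing assumption $h\kappa<1/n^\infty$ makes the self-retention coefficient nonnegative, and every $u_l$ is $\ge 0$, so all summands on the right are nonnegative. Discarding everything except the $l=i$ term and using $\chi_{ji}=1$ together with $\psi_{ji}\ge\underline{\psi}$ and the Step~1 bound,
\[
u_j[t_2+\tau+1]\;\ge\; h\underline{\psi}\,u_i\bigl[t_2+\tau-\tau_{ji}[t_2+\tau]\bigr] \;\ge\; h\underline{\psi}(1-hn^\infty\kappa)^{\tau}\,\alpha_d\,\frac{\D^k[t_1]}{2}.
\]
\emph{Third}, I relax this to the form stated in the proposition. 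Since $1+n^\infty\kappa\ge 1\ge(1-hn^\infty\kappa)^{\tau+1}$, we have $(1-hn^\infty\kappa)^{\tau}\ge(1-hn^\infty\kappa)^{2\tau+1}/(1+n^\infty\kappa)$, which weakens the previous inequality into exactly the claimed bound on $v_j^k[t_2+\tau+1]$.

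I expect the only substantive obstacle to be the positivity bookkeeping underlying the ``discard all but one'' move in Step~2: verifying that every delayed argument $t_2+\tau-\tau_{jl}[t_2+\tau]$ lies in $\{t_2,\ldots,t_2+\tau\}$ (where Lemma~\ref{L5.1}'s monotonicity of $\bar v^k$ applies, so that $u_l\ge 0$), and that $1-h\sum_l\chi_{jl}\psi_{jl}\ge 1-hn^\infty\kappa>0$ under the assumption $h\kappa<1/n^\infty$. Once these two positivity checks are in place, the estimate is essentially immediate, and the delicate discrete analogues of the contradiction argument (D-7)--(D-11) in the continuous proof can be bypassed entirely.
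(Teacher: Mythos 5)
Your proof is correct, and it takes a genuinely different and noticeably simpler route than the paper. The paper mirrors the continuous argument of Proposition \ref{P4.1}: it splits into Case A (where $v^k_j$ already dips below the threshold $\bar v^k[t_1]-(1-hn^\infty\kappa)^\tau\alpha_d\,\D^k[t_1]/2$ somewhere in the window, after which Lemma \ref{L5.2} propagates the dip) and Case B (where it does not), with Case B further subdivided into B.1/B.2. The reason the paper needs Case B's standing lower bound on $v^k_j$ is sign bookkeeping: in the one-step estimate \eqref{F-3} the term $h\psi_{ji}\bigl(v^k_i[\cdot]-v^k_j[t_2+\tau]\bigr)$ is replaced by $h\underline{\psi}(\cdots)$, which is only legitimate when the parenthesized quantity is known to be negative. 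Your change of variables $u_l[s]=\bar v^k[t_1]-v^k_l[s]$ dissolves this issue: the update becomes a nonnegative combination
\[
u_j[t_2+\tau+1]=\Bigl(1-h\sum_{l\in\N_j}\chi_{jl}\psi_{jl}\Bigr)u_j[t_2+\tau]+h\sum_{l\in\N_j}\chi_{jl}\psi_{jl}\,u_l\bigl[t_2+\tau-\tau_{jl}[t_2+\tau]\bigr],
\]
whose terms are all $\ge 0$ because every time argument is $\ge t_1$ (so Lemma \ref{L5.1} gives $v^k_l[s]\le\bar v^k[s]\le\bar v^k[t_1]$) and because $1-h\sum_l\chi_{jl}\psi_{jl}\ge 1-hn^\infty\kappa>0$ under $h\kappa<1/n^\infty$. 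Retaining only the $l=i$ term and invoking Lemma \ref{L5.2} then yields $u_j[t_2+\tau+1]\ge h\underline{\psi}(1-hn^\infty\kappa)^\tau\alpha_d\,\D^k[t_1]/2$, which is in fact \emph{stronger} than the stated bound, since $(1-hn^\infty\kappa)^{2\tau+1}/(1+n^\infty\kappa)\le(1-hn^\infty\kappa)^\tau$. What your approach buys is a shorter proof, a sharper intermediate constant, and a clear explanation of why the contradiction/case machinery of the continuous proof is an artifact of continuous time; what the paper's version buys is structural parallelism with Section \ref{sec:4}, and a constant of the exact form that gets iterated along the spanning tree in the proof of Theorem \ref{T3.2} (your sharper constant would propagate to a slightly larger $1-\delta$ there, but the stated theorem is unaffected since you recover the paper's bound by the final relaxation).
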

\begin{proof}
We consider the following two cases. \newline

\noindent $\bullet$ Case A: If there exists some time instant $t^*\in[t_2,t_2+\tau+1]$ satisfying
\[ v^k_{j}[t^*]\leq\bar{v}^k[t_1]-(1-hn^{\infty} \kappa)^\tau \alpha_d \frac{\D^k[t_1]}{2}, \]
then Lemma \ref{L5.2} implies
\begin{equation*}
v^k_{j}[t_2+\tau+1]\leq\bar{v}^k[t_1]-(1-hn^{\infty} \kappa)^{2\tau+1} \alpha_d \frac{\D^k[t_1]}{2}.
\end{equation*}

\vspace{0.2cm}

\noindent $\bullet$ Case B: For all $t\in[t_2, t_2+\tau+1]$, we have
\[ v^k_{j}[t]>\bar{v}^k[t_1]-(1-hn^{\infty} \kappa)^\tau \alpha_d \frac{\D^k[t_1]}{2}. \]
By our hypothesis on $i$, we have
\begin{align}
\begin{aligned} \label{F-3}
&v^k_{j}[t_2+\tau+1]\\
&\hspace{0.5cm} =v^k_{j}[t_2+\tau] + h \sum_{l = 1}^{N}  \chi_{jl}   \psi(\|\bx_l[t_2+\tau -\tau_{jl}[t_2 + \tau]] - \bx_j[t_2 + \tau] \|) \\
& \hspace{0.5cm} \times (v^k_{l}[t_2+\tau-\tau_{jl}[t_2+\tau]]-v^k_{j}[t_2+\tau])\\
&\hspace{0.5cm}\leq v^k_{j}[t_2+\tau]
+h  \psi(\|\bx_i[t_2+\tau -\tau_{ji}[t_2 + \tau]] - \bx_j[t_2 + \tau] \|) \\
& \hspace{0.5cm} \times
(v^k_{i}[t_2+\tau-\tau_{ji}[t_2+\tau]]-v^k_{j}[t_2+\tau])\\
&\hspace{0.5cm}+(n^{\infty}-1)h \kappa (\bar{v}^k[t_1]-v^k_{j}[t_2+\tau])\\
&\hspace{0.5cm} \leq v^k_{j}[t_2+\tau]
+h\underline{\psi}\left(\bar{v}^k[t_1]-(1-hn^{\infty} \kappa)^\tau
\alpha_d \frac{\D^k[t_1]}{2}-v^k_{j}[t_2+\tau]\right)\\
&\hspace{0.5cm} +h(n^{\infty}-1)\kappa (\bar{v}^k[t_1]-v^k_{j}[t_2+\tau])\\
&\hspace{0.5cm} =v^k_{j}[t_2+\tau]-h(\underline{\psi}+(n^{\infty}-1)\kappa)v^k_{j}[t_2+\tau]\\
&\hspace{0.5cm}+h(\underline{\psi}+(n^{\infty}-1)\kappa)\bar{v}^k[t_1]
-h\underline{\psi}(1-h n^{\infty} \kappa)^\tau \alpha_d \frac{\D^k[t_1]}{2}.
\end{aligned}
\end{align}
We now distinguish two subcases. \newline

\noindent $\diamond$~Case B.1: If
\[
v^k_{j}[t_2+\tau]>\bar{v}^k[t_1]-\frac{\underline{\psi}(1-hn^{\infty} \kappa)^\tau \alpha_d}
{1+\underline{\psi}+(n^{\infty}-1)\kappa} \frac{\D^k[t_1]}{2},
\]
then \eqref{F-3} gives
\begin{align*}
\begin{aligned}
&v^k_{j}[t_2+\tau+1]\\
&\hspace{0.5cm} <\bar{v}^k[t_1]
-h(\underline{\psi}+(n^{\infty}-1)\kappa)
\left(\bar{v}^k[t_1]-\frac{\underline{\psi}(1-hn^{\infty} \kappa)^\tau \alpha_d}
{1+\underline{\psi}+(n^{\infty}-1)\kappa} \frac{\D^k[t_1]}{2} \right)\\
&\hspace{0.5cm} +h(\underline{\psi}+(n^{\infty}-1)\kappa)\bar{v}^k[t_1]
-h\underline{\psi}(1-h n^{\infty} \kappa)^\tau \alpha_d \frac{\D^k[t_1]}{2}\\
&\hspace{0.5cm} =\bar{v}^k[t_1]-\frac{h\underline{\psi}(1-hn^{\infty} \kappa)^\tau \alpha_d}{1+\underline{\psi}+(n^{\infty}-1)\kappa}
\frac{\D^k[t_1]}{2}.
\end{aligned}
\end{align*}

\vspace{0.5cm}

\noindent $\diamond$~Case B.2: Otherwise, \eqref{F-3} again gives
\begin{align*}
\begin{aligned}
&v^k_{j}[t_2+\tau+1]\\
&\hspace{0.5cm} \leq \left(1-h(\underline{\psi}+(n^{\infty}-1)\kappa)\right)
\left(\bar{v}^k[t_1]-\frac{\underline{\psi}(1-hn^{\infty} \kappa)^\tau \alpha_d}
{1+\underline{\psi}+(n^{\infty}-1)\kappa}
\frac{\D^k[t_1]}{2}\right)\\
&\hspace{0.5cm} +h(\underline{\psi}+(n^{\infty}-1)\kappa)\bar{v}^k[t_1]
-h\underline{\psi}(1-h n^{\infty} \kappa)^\tau \alpha_d \frac{\D^k[t_1]}{2}.\\
&\hspace{0.5cm} =\bar{v}^k[t_1]-\frac{h\underline{\psi}(1-h n^{\infty} \kappa)^\tau \alpha_d}{1+\underline{\psi}+(n^{\infty}-1)\kappa}
\frac{\D^k[t_1]}{2}.
\end{aligned}
\end{align*}
We combine all estimates in Case B.1 and Case B.2 to get the desired estimate:
\[ v^k_{j} [t_2+\tau+1] \leq \bar{v}^k[t_1]-\frac{h\underline{\psi}\alpha_d(1-hn^{\infty} \kappa)^{2\tau+1}} {1+n^{\infty} \kappa}
\frac{\D^k[t_1]}{2}.
\]
\end{proof}
Now, we are ready to provide the proof of Theorem \ref{T3.2}.

\subsection{Proof of Theorem~\ref{T3.2} } \label{sec:5.2} By Lemma~\ref{L5.1}, we can see that $\D[t]$ is non-increasing with $t$. Next, the proof is divided into two steps. \newline

\noindent $\bullet$~Step A (Exponential decay of relative velocities):  Suppose that network topology ${\mathcal G},$ coupling strength $\kappa$, and time-delay $\tau$ satisfy the relations:
\[ \gamma_g > 0, \quad  n^{\infty} = \max_{1\leq i \leq N} |{\mathcal N}_i| > 0, \quad \tau > 0, \quad \kappa > 0, \quad 0<h \kappa<\frac{1}{n^{\infty}}. \]
For $\rho > 0$ and initial spatial condition $X(0)$, we define $\delta$ as follows.
\begin{equation*}
\delta :=1 -\frac{h^{\gamma_g} (\psi(X(0)+\rho))^{\gamma_g}(1-h n^{\infty} \kappa)^{\gamma_g (3\tau+1)}}{2(1+n^{\infty} \kappa)^{\gamma_g}} >0.
\end{equation*}
For given initial data set $\{ (\bx^0[t], \bv^0[t]):~\tau \leq t \leq 0 \}$,  we show that if there exists a positive constant $\rho>0$ such that
\begin{align}
\begin{aligned} \label{F-4}
\D[0] &\leq \frac{1}{\sqrt{d}h\gamma_g (2\tau+1)} \times \frac{(1-h n^{\infty} \kappa)^{\gamma_g(3\tau+1)}h^{\gamma_g}  (\psi(X[0]+\rho))^{\gamma_g}}{2(1+n^{\infty} \kappa)^{\gamma_g} }  \rho =  \frac{( 1-\delta) \rho}{\sqrt{d}h\gamma_g (2\tau+1)},
\end{aligned}
\end{align}
then we have exponential decay of relative velocities. To this end, it is sufficient to verify
\begin{equation}\label{F-5}
\D[n\gamma_g(2\tau+1)]\leq\delta^n\D[0].
\end{equation}
Next, we verify the relation \eqref{F-5} using mathematical induction on $n$. \newline

\noindent $\diamond$~Step A (Initial step):  Note that  the relation \eqref{F-5} is trivially true.  \newline

\noindent $\diamond$~Step B (induction step): Suppose that the relation \eqref{F-5} holds for all $0\leq n\leq T \in \NN \cup \{0 \}$:
\begin{equation}\label{F-6}
\D[n\gamma_g(2\tau+1)]\leq\delta^n\D[0], \quad 0 \leq n \leq T,
\end{equation}
For $T\gamma_g(2\tau+1)\leq t\leq (T+1)\gamma_g(2\tau+1)$ and $j\in\N_i$, it follows from system~\eqref{F-1} that we have
\begin{align*}
\begin{aligned}
&\|\bx_i[t]-\bx_j[t-\tau_{ij}[t]]\|\\
&\quad\leq\|\bx_i[t-1]-\bx_j[t-\tau_{ij}[t]-1]\|
+h\|\bv_i[t-1]-\bv_j[t-\tau_{ij}[t]-1]\|\\
&\quad\leq\|\bx_i[0]-\bx_j[-\tau_{ij}[t]]\|
+h\sum_{s=0}^{t-1}\|\bv_i[s]-\bv_j[s-\tau_{ij}[t]]\|.\\
\end{aligned}
\end{align*}
Recall that we have
\[ \|\bv_i[s]-\bv_j[s-\tau_{ij}[t]]\|\leq\sqrt{d}\D[s]. \]
Then, we use the same argument as in \eqref{D-17} to find
\begin{align}
\begin{aligned} \label{F-7}
&\|\bx_i[t]-\bx_j[t-\tau_{ij}[t]]\| \\
& \hspace{0.5cm} \leq\|\bx_i[0]-\bx_j[-\tau_{ij}[t]]\|+\sqrt{d}h\sum_{s=0}^{t-1}\D[s] \leq X[0]
+\sqrt{d}h\sum_{n=0}^T\sum_{s=n\gamma_g(2\tau+1)}^{(n+1)\gamma_g (2\tau+1)-1}\D[s]\\
&  \hspace{0.5cm}  \leq X[0] +\sqrt{d}h\gamma_g (2\tau+1)\sum_{n=0}^T\delta^n\D[0] \leq X[0]
+\frac{\sqrt{d}h\gamma_g(2\tau+1)}{1-\delta}\D(0)\\
& \hspace{0.5cm} \leq X[0]+\rho,
\end{aligned}
\end{align}
where the last inequality is from assumption~\eqref{F-4}. \newline

Then, we use \eqref{F-7} and monotonicity of $\psi$ to obtain
\[\label{F-8}
\psi(\|\bx_j[t -\tau_{ij}[t]] - \bx_i[t] \|) \geq \psi(X[0] + \rho).
\]
Since $\G$ has a spanning tree, we take a root $r$ of $\G$ such that $\gamma_g=\max_{j\in\V}\dist(r,j)$. For a fixed $k \in \{1,\cdots, d\}$, without loss of generality, we assume that
\[
v^k_{r}[T\gamma_g(2\tau+1)] \leq \bar{v}^k[T\gamma_g(2\tau+1)]-\frac{\D^k[T\gamma_g(2\tau+1)]}{2}.
\]
Similar to the proof of Theorem \ref{T3.1},  the other case can be analyzed by considering the vector $-v^k[T\gamma_g(2\tau+1)]$. We apply Lemma \ref{L5.2} to have that for all $t\geq T\gamma_g(2\tau+1)$,
\begin{equation}\label{F-9}
v^k_{r}[t]\leq\bar{v}^k[T\gamma_g(2\tau+1)]-
(1-h n^{\infty} \kappa)^{t-T\gamma_g(2\tau+1)}\frac{\D^k[T\gamma_g (2\tau+1)]}{2}.
\end{equation}
For any $l \neq r$, we can find a path from $r$ to $i$ of length $p\leq \gamma_g$:
\[  r=i_0~\rightarrow~i_1~\rightarrow~\ldots~\rightarrow i_p=l. \]
For $i_1$, we apply Proposition \ref{P5.1} with
\[ i=r, \quad j=i_1, \quad t_1=t_2=T\gamma_g (2\tau+1), \quad \alpha_d =1 \quad \mbox{and} \quad \underline{\psi}=\psi(X[0] + \rho) \]
to get
\[ v^k_{i_1}[T\gamma_g (2\tau+1)+\tau+1]\leq\bar{v}^k [T\gamma_g (2\tau+1)]-\frac{h \psi(X[0] + \rho) (1-h n^{\infty} \kappa)^{2\tau+1}}  {1+ n^{\infty} \kappa}\frac{\D^k[T\gamma_g (2\tau+1)]}{2}.
\]
This and Lemma \ref{L5.2} imply
\[ v^k_{i_1}[T\gamma_g (2\tau+1)+2\tau+1]\leq\bar{v}^k[T\gamma_g(2\tau+1)]
-\frac{h\psi(X[0] + \rho) (1-h n^{\infty} \kappa)^{3\tau+1}} {1+n^{\infty} \kappa}\frac{\D^k[T\gamma_g(2\tau+1)]}{2}.
\]
 We continue to apply Proposition \ref{P5.1} with
 \begin{align*}
 \begin{aligned}
&  i=i_1, \quad j=i_2, \quad t_1=T\gamma_g(2\tau+1), \quad t_2=T\gamma_g(2\tau+1)+2\tau+1,  \\
& \alpha_d= \frac{h\psi(X[0] + \rho) (1-hn^{\infty} \kappa)^{3\tau+1}}{1+ n^{\infty} \kappa} \quad \mbox{and} \quad \underline{\psi} :=\psi(X[0] + \rho)
\end{aligned}
\end{align*}
to get
\begin{align*}
\begin{aligned}
&v^k_{i_2}[T\gamma_g (2\tau+1)+2\tau+1+\tau+1]\\
&\hspace{2cm} \leq\bar{v}^k [T\gamma_g (2\tau+1)]
-\frac{h^2 (\psi(X[0] + \rho))^2(1-hn^{\infty} \kappa)^{5\tau+2}}
 {(1+ n^{\infty} \kappa)^2}\frac{\D^k [T\gamma_g (2\tau+1)]}{2}.
\end{aligned}
\end{align*}
We proceed the above arguments until $i_p=l$ to derive
\begin{align*}
\begin{aligned}
&v^k_{l}[T\gamma_g (2\tau+1)+(p-1)(2\tau+1)+\tau+1]\\
&\hspace{1cm} \leq\bar{v}^k[T\gamma_g (2\tau+1)]
-\frac{h^p (\psi(X[0] + \rho))^p(1-h n^{\infty} \kappa)^{(3p-1)\tau+p}}
 {(1+ n^{\infty} \kappa)^p}\frac{\D^k[T\gamma_g (2\tau+1)]}{2}.
\end{aligned}
\end{align*}
This and Lemma \ref{L5.2} imply
\begin{align*}
\begin{aligned}
&v^k_{l}[T\gamma_g (2\tau+1)+(\gamma_g-1)(2\tau+1)+\tau+1]\\
&\hspace{1cm} \leq\bar{v}^k[T\gamma_g (2\tau+1)]
-\frac{h^p (\psi(X[0] + \rho))^p (1-h n^{\infty} \kappa)^{\gamma_g (2\tau+1)+(p-1)\tau}}
 {(1+ n^{\infty} \kappa)^p}\frac{\D^k[T\gamma_g (2\tau+1)]}{2}\\
&\hspace{1cm} \leq\bar{v}^k[T\gamma_g (2\tau+1)]
-\frac{h^{\gamma_g} (\psi(X[0] + \rho))^{\gamma_g} (1-h n^{\infty} \kappa)^{(3\gamma_g-1)\tau+\gamma_g}}
 {(1+n^{\infty} \kappa)^{\gamma_g}}\frac{\D^k[T\gamma_g (2\tau+1)]}{2}.
\end{aligned}
\end{align*}
We recall \eqref{F-9}, and see that the above inequality holds for all $1\leq i\leq k$. We once again apply Lemma \ref{L5.2} to conclude that, for all $t\geq T\gamma_g(2\tau+1)+(\gamma_g-1)(2\tau+1)+\tau+1$ and $i\leq k$, we have
\[
v^k_{i}[t]\leq\bar{v}^k[T\gamma_g(2\tau+1)]
-\frac{h^{\gamma_g}   (\psi(X[0] + \rho))^{\gamma_g} (1-h n^{\infty} \kappa)^{t-T\gamma_g(2\tau+1)+\gamma_g \tau}}
 {(1+ n^{\infty} \kappa)^{\gamma_g}}\frac{\D^k[T\gamma_g(2\tau+1)]}{2}.
\]
Therefore, we have
\begin{align*}
\begin{aligned}
&\bar{v}^k[(T+1)\gamma_g (2\tau+1)]\\
&\hspace{1cm} =\max_{\substack{1\leq i\leq k\\(T+1)\gamma_g (2\tau+1)-\tau\leq t\leq(T+1)\gamma_g (2\tau+1)}}v^k_{i}[t]\\
&\hspace{1cm} \leq\bar{v}^k [T\gamma_g (2\tau+1)]
-\frac{h^{\gamma_g} (\psi(X[0] + \rho))^{\gamma_g} (1-h n^{\infty} \kappa)^{\gamma_g (3\tau+1)}}
 {(1+ n^{\infty} \kappa)^{\gamma_g}}\frac{\D^k[T\gamma_g(2\tau+1)]}{2}
\end{aligned}
\end{align*}
This and induction hypothesis \eqref{F-6} imply
\begin{align*}
\begin{aligned}
&\D^k[(T+1)\gamma_g (2\tau+1)]\\
&\hspace{1cm} =\bar{v}^k[(T+1)\gamma_g(2\tau+1)]-\underline{v}^k[(T+1)\gamma_g(2\tau+1)]\\
&\hspace{1cm} \leq\bar{v}^k[T\gamma_g (2\tau+1)]
-\frac{h^{\gamma_g} (\psi(X[0] + \rho))^{\gamma_g} (1-h n^{\infty} \kappa)^{\gamma_g(3\tau+1)}}
 {(1+ n^{\infty} \kappa)^{\gamma_g}}\frac{\D^k[T\gamma_g(2\tau+1)]}{2} \\
& \hspace{1cm} -\underline{v}^k[T\gamma_g(2\tau+1)]\\
&\hspace{1cm} =\delta\D^k[T\gamma_g (2\tau+1)] \leq\delta^{T+1}\D[0].
\end{aligned}
\end{align*}
Thuse, we have
\[
 |v^k_i[s] - v^k_j[s]|  \leq\D[s]  \leq \delta^{[\frac{s}{\gamma_g (2\tau +1)}]} \D[0],
\]
which implies the velocity flocking:
\[
\lim_{t \to \infty} ||\bv_i[t] - \bv_j[t]|| = 0.
\]

\noindent $\bullet$~Step B (Uniform boundedness of relative positions): By the same arguments in Step B in Section \ref{sec:4.2}, we have
\[ \sup_{-\tau \leq t < \infty} ||\bx_i[t] - \bx_j[t] \|  < \infty.   \]
We combine the estimates in Step A and Step B to obtain the desired mono-cluster flocking estimate.

\section{Numerical Simulations}\label{sec:6}
In this section, we present several numerical simulations to illustrate our theoretical results obtained in previous sections, and  provide further insight into the effects of communication time-delays on the dynamic behavior of the C-S model for initial configurations violating our sufficient condition \eqref{C-1-1} for the continuous C-S model. For numerical simulation, we used the fourth-order Runge-Kutta scheme.

\subsection{Simulation setup} \label{sec:6.1} Consider a flock of four C-S particles moving in $\bbr^2$ by the system~\eqref{C-1} on a communication network (Figure 1):
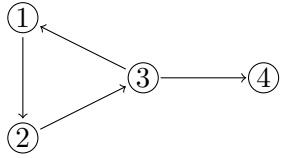
\begin{figure}[H]
\begin{center}
\begin{tikzpicture}[scale=4]
\node (a) at (0,0.2)  {1};
\node (b) at (0,-0.2)  {2};
\node (c) at (0.4,0)  {3};
\node (d) at (0.8,0)  {4};
\draw (a) circle[radius=0.05];
\draw (b) circle[radius=0.05];
\draw (c) circle[radius=0.05];
\draw (d) circle[radius=0.05];
\draw[->]  (a) -- (b);
\draw[->]   (b) -- (c);
\draw[->]   (c) -- (a);
\draw[->]   (c) -- (d);
\end{tikzpicture}
\caption{Digraph connection topology $\C$.}
\end{center}
\vspace{-0.5cm}
\end{figure}
From this network, it is easy to see that the neighbor sets, connection matrix $\C = (\chi_{ij})$ and depth $\gamma_g$ are given as follows.
\begin{align*}
\begin{aligned}
& \C := \left( \begin{array}{cccc}
0 & 0 &1 &0 \\
1 & 0 & 0 &0 \\
0 & 1 & 0 & 0 \\
0 & 0 & 1 & 0
\end{array} \right), \quad {\mathcal N}_1 = \{3 \}, \quad {\mathcal N}_2 := \{1 \}, \\
& {\mathcal N}_3 := \{2 \}, \quad {\mathcal N}_4 := \{3 \}, \quad n^{\infty} =1, \quad \gamma_g = 2.
\end{aligned}
\end{align*}
For the communication weight and time-delay, we take the C-S weight:
\[ \psi(r) = \frac{1}{(1 + r^2)^{\beta}}, \quad \beta \geq 0, \quad \kappa = 1, \quad \tau_{ij} = 1, \quad 1 \leq i, j \leq N, \quad i\neq j. \]
As can be seen in Corollary \ref{C3.1}, for the long-ranged case with $2\beta \gamma_g  < 1$, we have a mono-cluster flocking for any initial configurations. Thus, we only focus on the threshold and short-ranged cases:
\[ 2\beta \gamma_g = 4 \beta =  1, \qquad 2 \beta \gamma_g  = 4 \beta > 1. \]
in the sequel. We set $\{(\bx_i,\bv_i):1\leq i\leq 4\}$ to be the solution of the system \eqref{C-1} over the digraph given in Figure 1, and we set $\{(\tilde\bx_i,\tilde\bv_i):1\leq i\leq 4\}$ to be the solution over the all-to-all network.

\subsection{Threshold case with $\beta = \frac{1}{4}$}
 Our sufficient condition displayed in Corollary \ref{C3.1} says that if initial configuration satisfies
\begin{equation} \label{G-1}
  \D(0) <  \frac{e^{-10}}{48 \sqrt{2}}, \quad X(0) < \infty
\end{equation}
then, we have an exponential mono-cluster synchronization. We performed our first numerical experiment with the initial data satisfying the condition \eqref{G-1}. Note that the maximal time-delay is unity, i.e., $\tau = 1$.  The fixed initial data for the two flows $\{(\bx_i,\bv_i):1\leq i\leq 4\}$ and $\{(\tilde\bx_i,\tilde\bv_i):1\leq i\leq 4\}$, for all $-1 \leq t\leq0$,  are given in the table below.
\begin{center}
\begin{tabular}{||c|c||c|c||c|c||c|c||}
  \hline
$\bx_1(t)$ & $(1, 0)$ & $\bv_1(t)$ & $\frac{e^{-10}}{672 \sqrt{2}}(1, -2)$
&  $\bx_2(t)$ & $(0, 1)$ & $\bv_2(t)$ & $\frac{e^{-10}}{672 \sqrt{2}}(3, -4)$\\
 $\bx_3(t)$ & $(-1, 0)$ & $\bv_3(t)$ & $\frac{e^{-10}}{672 \sqrt{2}}(5, 6)$
 & $\bx_4(t)$ & $(0, -1)$ & $\bv_4(t)$ & $\frac{e^{-10}}{672 \sqrt{2}}(-7, -8)$\\
  \hline
\end{tabular}
 \end{center}

In Figure 2, we compare the dynamic behavior of the first velocity components over the given digraph and all-to-all network, respectively.
\begin{figure}[H]
\centering
\includegraphics[width=1.0\textwidth]{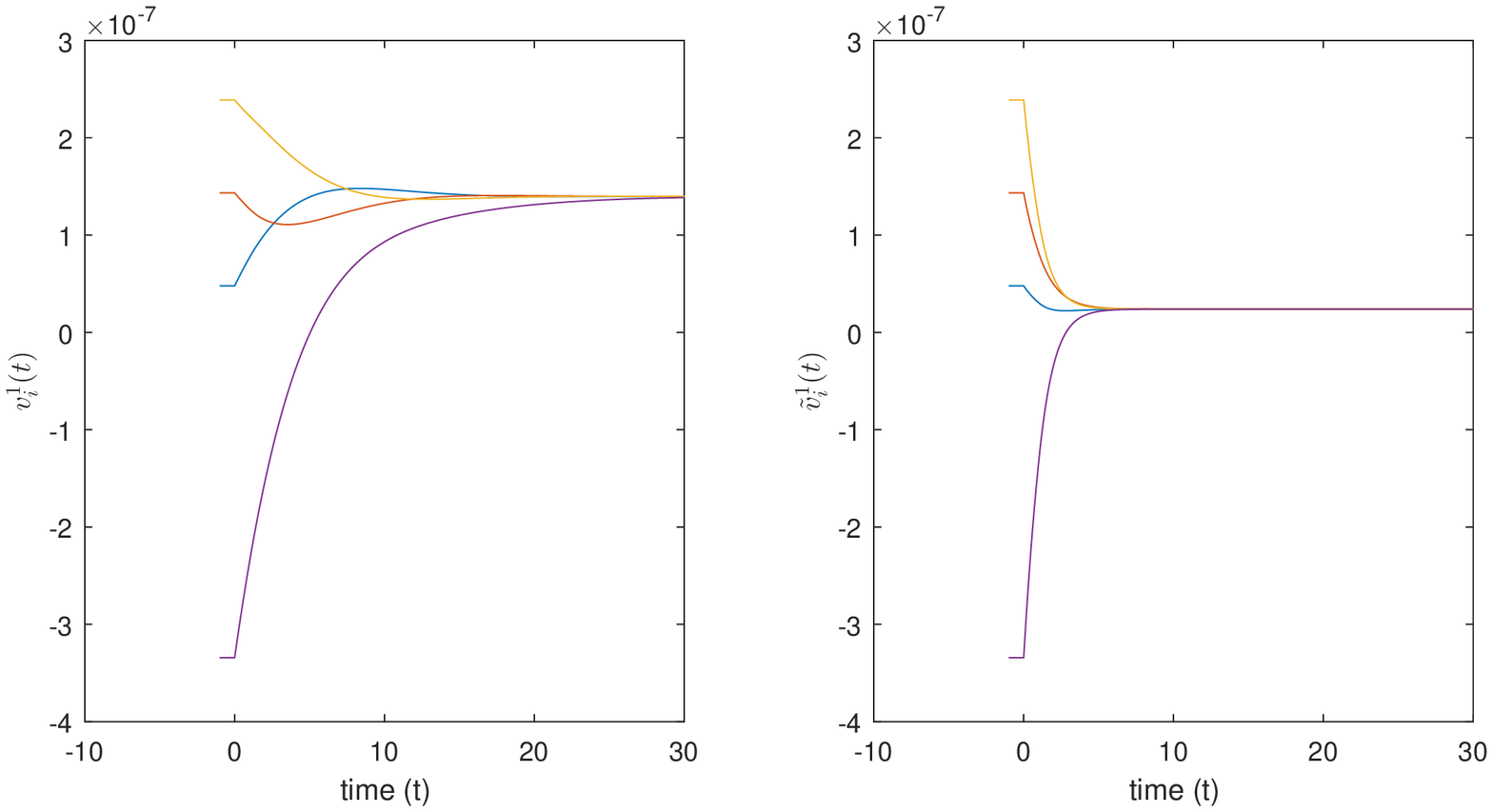}
\caption{The convergence trajectories of the first component velocities satisfying the condition \eqref{G-1}. Left: Digraph $\C$ and right: all-to-all graph}
\label{figu2}
\end{figure}

As expected, two flows both exhibit mono-cluster flocking, and the all-to-all network case flocks faster than the digraph case.
However, note that the condition \eqref{G-1} is not necessary for the flocking to occur. In our second numerical experiment, we set the initial data for the two flows $\{(\bx_i,\bv_i):1\leq i\leq 4\}$ and $\{(\tilde\bx_i,\tilde\bv_i):1\leq i\leq 4\}$, for all $-\tau\leq t\leq0$, as follows.
\begin{center}
\begin{tabular}{||c|c||c|c||c|c||c|c||}
  \hline
$\bx_1(t)$ & $(1, 0)$ & $\bv_1(t)$ & $(1, -2)$
&  $\bx_2(t)$ & $(0, 1)$ & $\bv_2(t)$ & $(3, -4)$\\
 $\bx_3(t)$ & $(-1, 0)$ & $\bv_3(t)$ & $(5, 6)$
 & $\bx_4(t)$ & $(0, -1)$ & $\bv_4(t)$ & $(-7, -8)$\\
  \hline
\end{tabular}
 \end{center}

In Figure 3, we again compare the dynamic behavior of the first velocity components over the given digraph and all-to-all network, respectively.
\begin{figure}[H]
\centering
\includegraphics[width=1.0\textwidth]{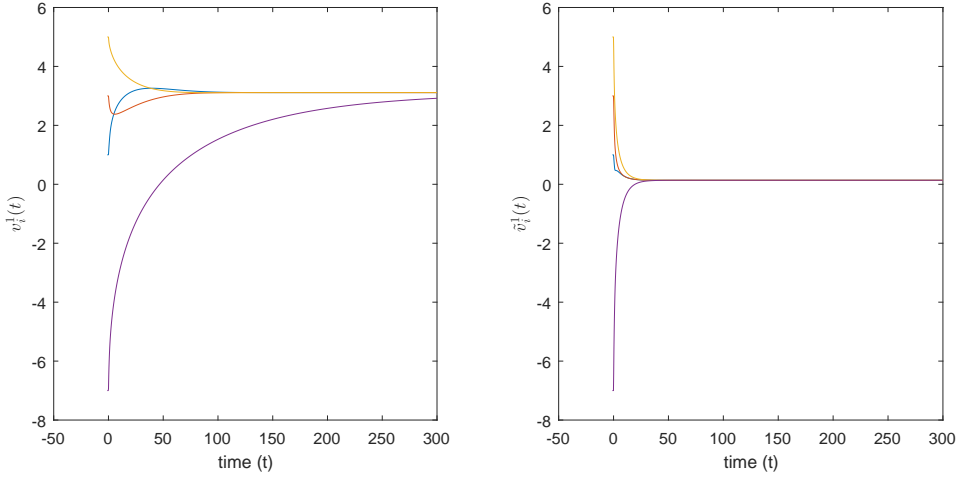}
\caption{The convergence trajectories of the first component velocities not satisfying the condition \eqref{G-1}. Left: Digraph ${\mathcal C}$ and right: all-to-all graph}
\label{figu3}
\end{figure}
Based on numerical simulations, as in the C-S model with all-to-all network, we conjecture that for the digraph, the critical threshold will be $\beta = \frac{1}{2\gamma_g}$ from the mono-cluster flocking view point, i.e., if $2\gamma_g\beta > 1$, we might have multi-cluster flocking depending on initial configurations, whereas for $2\gamma_g\beta \leq 1$, we have a mono-cluster flocking for any initial configurations.

\subsection{Threshold case with $\beta=\frac{17}{32} > \frac{1}{4}$}  Next, we describe our simulation setup as follows. The initial configuration $\{(\bx_i,\bv_i):1\leq i\leq 4\}$ and $\{(\tilde\bx_i,\tilde\bv_i):1\leq i\leq 4\}$ are given as in the following table:
\begin{center}
\begin{tabular}{||c|c||c|c||c|c||c|c||}
  \hline
$\bx_1(t)$ & $(1, 0)$ & $\bv_1(t)$ & $\frac{e^{-10}}{7056 \sqrt{2}}(1, -2)$
&  $\bx_2(t)$ & $(0, 1)$ & $\bv_2(t)$ & $\frac{e^{-10}}{7056 \sqrt{2}}(3, -4)$\\
 $\bx_3(t)$ & $(-1, 0)$ & $\bv_3(t)$ & $\frac{e^{-10}}{7056 \sqrt{2}}(5, 6)$
 & $\bx_4(t)$ & $(0, -1)$ & $\bv_4(t)$ & $\frac{e^{-10}}{7056 \sqrt{2}}(-7, -8)$\\
  \hline
\end{tabular}
 \end{center}
Then $X(0)=\rho_+=2$, so by some calculation we deduce that the above initial data satisfy the sufficient condition for mono-cluster flocking to occur displayed in Corollary \ref{C3.1}. In Figure 4, we can see that two flows exhibit mono-cluster flocking as expected.

\begin{figure}[H]
\centering
\includegraphics[width=1.0\textwidth]{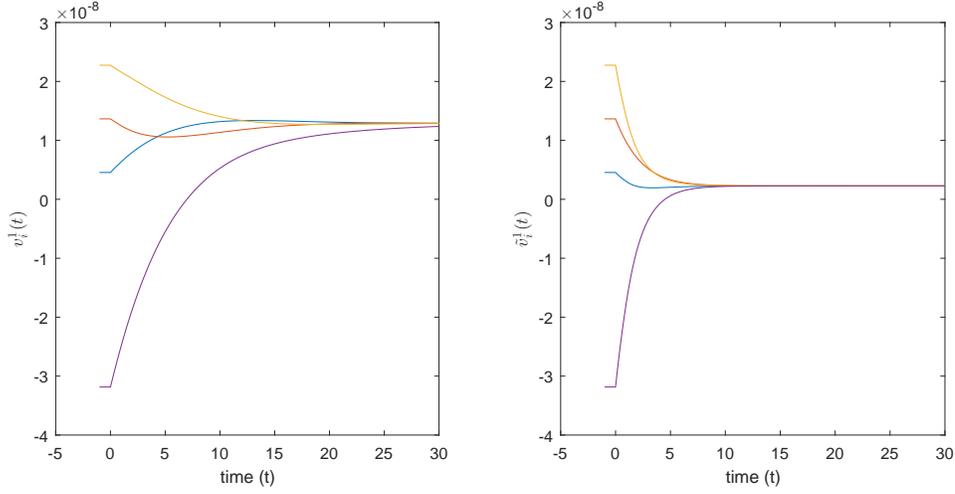}
\caption{The convergence trajectories of the first component velocities satisfying the condition in Corollary \ref{C3.1}.}
\label{figu4}
\end{figure}
However, if initial data for two flows do not satisfy the condition given in Corollary \ref{C3.1}, then the flocking might not occur. Suppose that initial data for two flows $\{(\bx_i,\bv_i):1\leq i\leq 4\}$ and $\{(\tilde\bx_i,\tilde\bv_i):1\leq i\leq 4\}$, for all $-\tau\leq t\leq0$ are given as in the following table:
\begin{center}
\begin{tabular}{||c|c||c|c||c|c||c|c||}
  \hline
$\bx_1(t)$ & $(1, 0)$ & $\bv_1(t)$ & $(1, -2)$
&  $\bx_2(t)$ & $(0, 1)$ & $\bv_2(t)$ & $(3, -4)$\\
 $\bx_3(t)$ & $(-1, 0)$ & $\bv_3(t)$ & $(5, 6)$
 & $\bx_4(t)$ & $(0, -1)$ & $\bv_4(t)$ & $(-7, -8)$\\
  \hline
\end{tabular}
 \end{center}
In Figure 5, we can see that the two flows do not exhibit mono-cluster flocking.

\begin{figure}[H]
\centering
\includegraphics[width=1.0\textwidth]{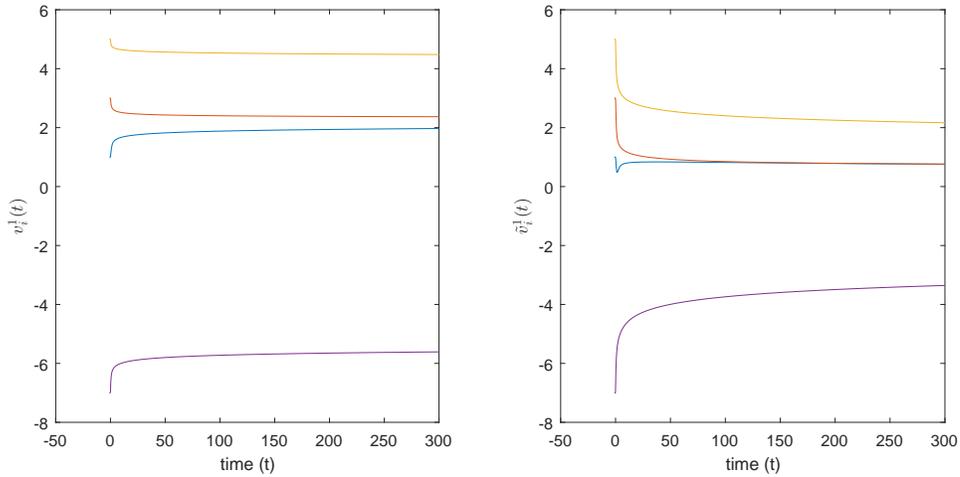}
\caption{The trajectories of the first component velocities not satisfying the condition in Corollary \ref{C3.1}.}
\label{figu5}
\end{figure}

\section{Conclusion}\label{sec:7}
In this paper, we studied the emergent dynamics of the continuous and discrete Cucker-Smale models on digraph connection topology under the effect of time-varying time delays. For a mono-cluster flocking behavior of the ensemble, we assume that our digraph contains a spanning tree, i.e., there is a root whose information flows into any particle along a finite length path. We explicitly provide sufficient frameworks for the  C-S models, which lead to the mono-cluster flocking in terms of system parameters and initial data. In particular, for the explicit C-S communication weights, we provided explicit conditions on the size of initial data. Our analytical results show the robustness of asymptotic mono-cluster flocking  with respect to time-delays. There are other interesting remaining questions. For example, we have not addressed the emergence of multi-cluster flockings which can occur, when our sufficient frameworks are not valid. This can be seen in numerical simulations in Section \ref{sec:6}. These issues will be discussed in future work.

\end{document}